\theoremstyle{definition}
\newtheorem{remark}{Remark}[section]
\newtheorem{proposition}{Proposition}[section]
\newcommand{\MC}{{M\kern-0.20em C}}
\newcommand{\x}{{\bf x}}
\newcommand{\y}{{\bf y}}
\begin{document}

\title{Stochastic Quadrature Rules for Solving PDEs using Neural Networks}
\author[1]{Jamie M. Taylor (\texttt{jamie.taylor@cunef.edu}) }
\author[2,3,4]{David Pardo (\texttt{david.pardo@ehu.eus})}
\affil[1]{Department of Mathematics, CUNEF Universidad, Madrid, Spain }
\affil[2]{University of the Basque Country (UPV/EHU), Leioa, Spain}
\affil[3]{Basque Center for Applied Mathematics (BCAM), Bilbao, Spain}
\affil[4]{Ikerbasque: Basque Foundation for Science, Bilbao, Spain}
\date{}
\maketitle

\abstract{
We examine the challenges associated with numerical integration when applying Neural Networks to solve Partial Differential Equations (PDEs). We specifically investigate the Deep Ritz Method (DRM), chosen for its practical applicability and known sensitivity to integration inaccuracies. Our research demonstrates that both standard deterministic integration techniques and biased stochastic quadrature methods can lead to incorrect solutions. In contrast, employing high-order, unbiased stochastic quadrature rules defined on integration meshes in low dimensions is shown to significantly enhance convergence rates at a comparable computational expense with respect to low-order methods like Monte Carlo. Additionally, we introduce novel stochastic quadrature approaches designed for triangular and tetrahedral mesh elements, offering increased adaptability for handling complex geometric domains. We highlight that the variance inherent in the stochastic gradient acts as a bottleneck for convergence. Furthermore, we observe that for gradient-based optimisation, the crucial factor is the accurate integration of the gradient, rather than just minimizing the quadrature error of the loss function itself.\footnote{{Data availability: The TensorFlow implementation of the code used in this article is publicly available at \url{https://github.com/jamie-m-taylor/Stochastic-Integration}}} 
}


\section{Introduction}

Neural Networks (NNs) are potent tools for solving certain Partial Differential Equations (PDEs) \cite{blechschmidt2021three,huang2022partial,karniadakis2021physics,raissi2019physics},   for example, when dealing with high-dimensional problems \cite{han2018solving,zang2020weak}, inverse problems \cite{guo2022monte,khoo2019switchnet,kim2024review}, and parametric problems \cite{baharlouei2025least,brevis2021machine,brevis2024learning,khoo2021solving,uriarte2022finite,wang2021learning}. In particular, they are able to overcome the curse of dimensionality faced by classical methods such as Finite Element Methods (FEM) and Finite Difference Methods.

When solving a single PDE, the trial function is approximated by an NN, and approximating a solution to the PDE reduces to defining an appropriate loss function to be minimised over the trainable parameters that describe the NN. There exist multiple loss functions that can be employed for solving PDEs using NNs, including Physics Informed Neural Networks (PINNs) \cite{cai2021physics,cai2021physics2,raissi2019physics}, Variational Physics Informed Neural Networks (VPINNs) \cite{kharazmi2019variational,kharazmi2021hp,rojas2024robust,
taylor2024adaptive,taylor2023deep} and the Deep Ritz Method (DRM) \cite{uriarte2023deep,wang2022variational,yu2018deep}. They all employ a loss function defined in terms of integrals that must be evaluated via numerical quadrature. Here, we focus on designing adequate quadrature rules for PINNs, VPINNs, and DRM.

The standard approach to integration when employing NNs is to use Monte Carlo methods, which are simple to implement and offer dimension-independent scalings of the variance in terms of the number of quadrature points. Importance sampling - based on sampling nodes from non-uniform distributions designed to reduce the variance - can further increase accuracy. These approaches are used in \cite{nabian2021efficient}, where authors  employ a piecewise-constant approximation of the PINN loss integrand to define an importance sampling distribution, with a similar mesh-based sampling scheme for PINNs employed in \cite{yang2023dmis}. In \cite{wan2024adaptive}, adaptive importance sampling methods are used to train an auxiliary generative NN to produce integration points in the Deep Ritz Method, whilst \cite{nabian2021efficient} aims to use the loss function itself as the sampling distribution. The addition of collocation points where pointwise residuals are large may be used to improve convergence \cite{gao2023failure}. We highlight, however, that all of these methods focus on accurately integrating the loss rather than the gradient used by the optimiser. 

In the context of parametric PDEs for inverse problems, often the parameter space is high-dimensional, whilst the computational domain remains low dimensional (typically one-, two- or three-dimensional). In these problems, more accurate integration strategies than Monte Carlo over the computational domain may be employed. One approach that has produced successful results is interpolated neural networks, which project the NN onto a classical finite element space. They leverage the flexibility of the NN structure with the well-understood linear theory of finite elements and, in particular, employ exact integration over the computational domain \cite{badia2024compatible,badia2024finite,badia2025adaptive, li2025unfitted,rivera2022quadrature,omella2024r}.

In classical numerical methods that employ piecewise-polynomial trial functions such as Finite Element Methods (FEM) and Isogeometric Analysis (IGA), exact quadrature rules may be obtained. The most classical example is the $n$-point Gaussian quadrature rules in one dimension that are capable of integrating exactly degree $2n-1$ polynomials (see, for example, \cite{alma991000509117308131}). In higher-regularity spaces such as those employed by IGA, more efficient quadrature rules, such as Greville quadrature, may be employed, maintaining exactness at a lower cost (see \cite{auricchio2012simple,hughes2010efficient,johannessen2017optimal} and references therein). When employing NNs, however, no exact quadrature methods are known. The use of a fixed, deterministic quadrature schemes may lead to overfitting at the quadrature nodes, yielding small values of the residual at those locations whilst generalising poorly to the full computational domain \cite{rivera2022quadrature}. The use of a stochastic Monte Carlo scheme mitigates overfitting at the expense of lower accuracy than Gauss-type rules for fixed integrands. In this work, we will focus on stochastic Gauss-type rules, which overcome overfitting by employing new quadrature rules at each iteration whilst producing significantly more accurate quadrature in low-dimensional problems. 

The main contribution of this article is twofold: first, we explore and analyse the advantages and limitations of Gauss-type, stochastic integration techniques when using NNs to solve low-dimensional PDEs. In particular, we show how deterministic integration methods can lead to catastrophic results, and that stochastic methods can overcome overfitting issues and ensure convergence in an appropriate limit. We highlight the issue of bias in stochastic quadrature, which may lead to erroneous results, especially in VPINNs and DRM, even when employing an arbitrarily fine integration scheme. All of this is illustrated with simple numerical examples. In the second main contribution and based on the aforementioned observations, we construct novel stochastic, unbiased Gauss-type quadrature rules on triangular and tetrahedral elements, which offer flexibility when producing integration meshes in complex geometries. These new quadrature rules are designed by taking advantage of the symmetry of the integration elements so that integrating the given polynomial space reduces to a single-parameter problem. Throughout, we take the Deep Ritz Method as our prototypical problem as it is a simple test case of a loss function based on a single integral that is highly sensitive to integration errors. This sensitivity is expected in other losses based on weak formulations, such as WANs and VPINNs,\footnote{PINNs exhibit less sensitivity to integration errors because the loss function satisfies the interpolation property \cite[Definition 4.9.]{garrigos2023handbook}, which we comment on in \Cref{subsec.DRM}.} but we avoid complications related to the choice of test function spaces. Finally, the DRM is a problem of practical interest due to its consistency with the energy-norm error. 

The article is organised as follows. \Cref{subsec.DRM} outlines the Deep Ritz Method (DRM) and our model problems and architectures. \Cref{secDeterministic} highlights how the DRM may fail catastrophically when deterministic integration is used. Motivated by this, \Cref{subsec.quad.theory} introduces key properties related to stochastic quadrature methods, focusing on Gauss-type rules, and outlines the optimisation strategy we employ in the sequel. \Cref{secBias} identifies how biased integration methods may lead to incorrect results. In \Cref{secUnbiasedRules}, we review existing unbiased, Gauss-type stochastic rules from the literature and propose novel rules based on triangular and tetrahedral elements, empirically measuring their variances on simple integrands to understand their behaviour in the pre-asymptotic regime. \Cref{secDRMResults} presents numerical results with the DRM to illustrate how integration errors may affect convergence and highlight how variance reduction - equivalent to mitigating integration errors for unbiased rules - may significantly improve convergence. We further see in this section how convergence is limited by the variance of the stochastic gradient rather than the variance of the loss. Finally, we make concluding remarks in \Cref{secConclusions}, with technical results deferred to \Cref{appTriTet,appErrorEstimates}.

\section{Model Problems Using the Deep Ritz Method (DRM)}\label{subsec.DRM}

In this work, we consider Poisson's equation with homogeneous Dirichlet boundary conditions as our prototypical elliptic PDE. That is, we aim to find $u^*:\Omega\to\mathbb{R}$ such that 
\begin{equation}
\begin{array}{rc l}
\Delta u^*(\x)=& f(\x) &\forall\x\in\Omega,\\
u^*(\x)=&0& \forall\x\in\partial\Omega. 
\end{array}
\end{equation}
For simplicity, we take $\Omega=[0,1]^d$ for $d=1,2$, or $3$ and $f$ is chosen
such that $u^*$ is given by the manufactured solutions 
\begin{equation}\label{eq.exact.sol}
u^*(\x)=10\left(|\x-\x_0|^2-\frac{1}{16}\right)\prod\limits_{i=1}^d\sin(\pi x_i),
\end{equation}
where $\x_0$ is the centre of $\Omega$, namely, $\frac{1}{2}$ in 1D, $\left(\frac{1}{2},\frac{1}{2}\right)$ in 2D, and $\left(\frac{1}{2},\frac{1}{2},\frac{1}{2}\right)$ in 3D. We highlight that these are smooth solutions with few features.

\paragraph{The Deep Ritz Method.}

The continuum loss function employed in this work is that of the Deep Ritz Method, $\mathcal{L}_C:H^1_0(\Omega)\to\mathbb{R}$, given by
$$
\mathcal{L}_C(u)=\int_\Omega \frac{1}{2}|\nabla u(\x)|^2+f(\x)u(\x)\,d\x,
$$
whose unique minimiser in $H^1_0(\Omega)$, $u^*$, satisfies Poisson's equation in weak form, 
$$
\int_\Omega \nabla u^*(\x)\cdot\nabla v(\x)+f(\x)v(\x)\,d\x=0
$$
for all $v\in H^1_0(\Omega)$. The loss function is related to the $H^1_0$-error by the following equation:
\begin{equation}\label{eq.loss.equiv.error}
\mathcal{L}_C(u)-\mathcal{L}_C(u^*)=\frac{1}{2}\int_\Omega |\nabla u(\x)-\nabla u^*(\x)|^2\,dx, 
\end{equation}
for all $u\in H^1_0(\Omega)$. Thus, a loss reduction is proportional to a decrease in the square of the $H^1_0$ norm. The minimal value of the loss is generally strictly negative and unknown {\it a priori}. 

The DRM considers the trial function to be an NN, so that $u=u(\x;\theta)$, where $\theta\in\Theta$ are the trainable parameters of the NN. The boundary conditions may be imposed via a penalty method or strongly by imposing them into the architecture. Herein, we take the latter approach. The continuum problem is thus approximated as 
$$
\min\limits_{u\in H^1_0(\Omega)}\mathcal{L}_C(u)\approx \min\limits_{\theta\in \Theta}\mathcal{L}_C(u(\cdot;\theta)).
$$ 

\paragraph{Architectures.}\label{subsec.Model.Architecture}

We employ an NN architecture that automatically enforces the boundary conditions. Our architecture corresponds to a classical, feed-forward, fully-connected NN with a cutoff function. Explicitly, we define $\tilde{u}(x;\theta)$ to be a fully-connected, feed-forward NN with 3 hidden layers of 30 neurons each using $\tanh$ as our activation function, and define $u$ as
\begin{equation}
u(x;\theta)=\tilde{u}(x;\theta)\left(c_d\prod\limits_{i=1}^d x_i(1-x_i)\right), 
\end{equation}
where $c_d$ is a constant depending on the dimension $d$ to ensure that the $H^1_0$-norm of the cutoff function is one. This yields 1,951 trainable parameters in 1D, 1,981 in 2D, and 2,011 in 3D.

\paragraph{Discretisation of the loss function.}
The loss function is then discretised via a quadrature rule and implemented into a gradient-based optimisation scheme using Automatic Differentiation (AD) to evaluate the gradient. Explicitly, we employ a quadrature rule $\xi=(\x_j,w_j)_{j=1}^J$, where $(\x_j)_{j=1}^J$ are the quadrature nodes and $(w_j)_{j=1}^J$ are the weights, to approximate $\mathcal{L}_C$, yielding a discretised loss function $L$, so that 
$$
\mathcal{L}_C(u(\cdot;\theta))\approx L(\theta;\xi):=\sum\limits_{j=1}^J \left(\frac{1}{2}|\nabla u(\x_j;\theta)|^2+f(\x_j)u(\theta;\x_j)\right)w_j. 
$$
Since the gradient $\frac{\partial L}{\partial\theta}(\theta;\xi)$ is evaluated via AD of the loss function, it inherits the quadrature rule used to evaluate the loss, i.e., 
$$
\frac{\partial L}{\partial\theta}(\theta;\xi)=\sum\limits_{j=1}^J \left(\frac{\partial}{\partial\theta}\left(\frac{1}{2}|\nabla u(\x_j;\theta)|^2+f(\x_j)u(\theta;\x_j)\right)\right)w_j. 
$$
In particular, this corresponds to the integral of a high-dimensional, vectorial function with the same rule used for every component. 

The aforementioned model problem employs a simple loss function where the integrand is generally signed and non-zero at the exact minimiser, which makes integration more challenging. In particular, unlike the PINN loss, it does not satisfy the {\it interpolation} property (see \cite[Definition 4.9.]{garrigos2023handbook}), which states that the loss may be decomposed as a sum of functions
$$
\mathcal{L}(\theta)=\sum\limits_{i}f_i(\theta)
$$
such that there is a common value $\theta^*$ which minimises each $f_i$, that is, 
$$
f_i(\theta^*)=\inf f_i.
$$
In the case of PINNs, each $f_i$ would correspond to the residual at point $\x_i\in\Omega$. This structure, in which pointwise minimisation is equivalent to global minimisation, guarantees stability at the exact solution even in the presence of integration errors and exhibits automatic variance reduction during training \cite{ma2018power}. However, the chosen DRM problem is highly sensitive to integration errors, and exhibits a similar behaviour as that expected in other weak formulation losses, such as VPINNs and WANs. In addition, DRM provides a tractable test case based on a single integral, in contrast with VPINNs, which require a choice of test functions and a loss based on a large number of integrals, or WANs, which employ a delicate min/max training scheme.

\section{Deterministic quadrature}
\label{secDeterministic}

When employing deterministic quadrature rules, the DRM loss is highly susceptible to catastrophic overfitting, as noted in \cite{rivera2022quadrature}. If the discretised loss is taken as 
$$
L(\theta)=\sum\limits_{j=1}^J \left(\frac{1}{2}|\nabla u(\x_j;\theta)|^2+f(\x_j)u(\x_j;\theta)\right)w_j,
$$
with $\x_j\in\Omega$ and $w_j\geq 0$ fixed across all iterations, then $L(\theta)$ is generally unbounded from below. To see this, consider $u$ to have a small gradient at the finite number of quadrature nodes, and a large value of the opposite sign to $f$ at the nodes. Whilst this implies large gradients between the quadrature nodes, they are unseen by the discretised loss function. Due to the lack of lower bound, any accurate optimisation scheme leads to values of the discretised loss $L(\theta)\ll \mathcal{L}_C(u^*)$ and the introduction of erroneous, sharp transitions between quadrature nodes. 

\Cref{fig.DRM.det} illustrates this issue. We consider the 1D model problem and the NN architecture outlined in \Cref{subsec.Model.Architecture} and a variant of the Adam optimiser that we detail in \Cref{subsecAdam}. We train for 4,000 iterations with a midpoint rule using 20 uniformly-spaced quadrature points. We show the approximate solution and its derivative in \Cref{subfig_det_u,subfig_det_du}, respectively, highlighting the quadrature nodes' locations. Large jumps in the solution are observed away from quadrature nodes, particularly near the first and second. Nonetheless, the derivative attains small values at the nodes themselves, with in-between regions exhibiting large derivatives that are unseen by the loss. \Cref{subfig_det_loss} shows how the discretised loss achieves values far below the exact minimum of the continuum loss, which may only happen in the presence of significant integration errors. The simplest manner to avoid catastrophic overfitting is to employ a stochastic quadrature rule that employs new samples at every iteration.

\begin{figure}[H]\begin{center}
\begin{subfigure}{0.45\textwidth}
\begin{center}
\includegraphics[width=0.95\textwidth]{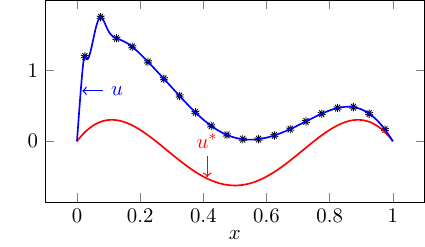}
\caption{The exact solution $u^*$ and the approximated solution $u$.  Black points indicate the location of the quadrature points. \label{subfig_det_u}}
\end{center}
\end{subfigure}\hspace{0.05\textwidth}
\begin{subfigure}{0.45\textwidth}
\begin{center}
\includegraphics[width=0.95\textwidth]{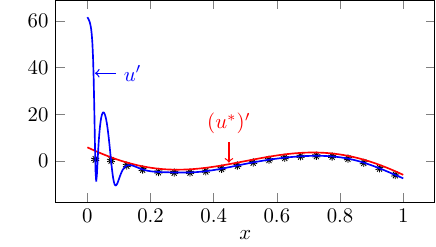}
\caption{The derivatives of the exact solution $u^*$.  Black points indicate the location of the quadrature points.\label{subfig_det_du}}
\end{center}
\end{subfigure}\hspace{0.05\textwidth}
\begin{subfigure}{0.45\textwidth}
\begin{center}
\includegraphics[width=0.95\textwidth]{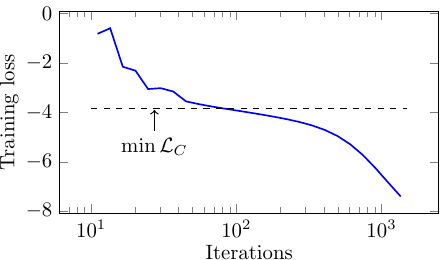}
\caption{Discretised loss $\mathcal{L}(\theta)$ per iteration. The dashed line shows the exact minimum of the continuous loss.\label{subfig_det_loss}}
\end{center}
\end{subfigure}
\caption{Results of the DRM using deterministic quadrature.}\label{fig.DRM.det}
\end{center}
\end{figure}

\section{Stochastic quadrature}\label{subsec.quad.theory}

\subsection{Monte Carlo methods}

The simplest stochastic integration rule is a vanilla Monte Carlo (MC) method, where the integral of an integrable function $f:\Omega\to\mathbb{R}$ on a bounded domain $\Omega\subset\mathbb{R}^d$ is approximated via 
$$
Q(f;(\x_n)_{n=1}^N)=\frac{|\Omega|}{N}\sum\limits_{n=1}^N f(\x_n),
$$
with $\x_n$ taken as independent and identically distributed (i.i.d.) uniform variables in $\Omega$. The expected value and variance of this quadrature rule are:
\begin{equation}
\begin{split}
\mathbb{E}(Q(f;(\x_n)_{n=1}^N))=& \int_\Omega f(\x)\,d\x,\\
\text{Var}(Q(f;(\x_n)_{n=1}^N))=&\frac{|\Omega|}{N}\left(\frac{1}{|\Omega|}\int_\Omega f(\x)^2\,dx-\left(\frac{1}{|\Omega|}\int_\Omega f(\x)\,d\x\right)^2\right),
\end{split}
\end{equation}
with infinite variance if $f$ is not square-integrable. When the variance is finite, it admits a scaling of $O(N^{-1})$, converging to zero as $N\to \infty$, independently of the dimension $d$. However, there is no {\it uniform} bound on the integration error as $N\to \infty$. That is, for every $N\in \mathbb{N}$, $\epsilon>0$ and any continuous integrand $f\in C(\bar{\Omega})$, the probability that
$$
\left|Q(f;(\x_n)_{n=1}^N)-\int_\Omega f(\x)\,d\x\right|\geq\max\limits_{\x_0\in\Omega}\left||\Omega|f(\x_0)-\int_\Omega f(\x)\,d\x\right|-\epsilon
$$
is positive. In particular, whilst being improbable, large integration errors are possible even for large $N$.

Quasi-Monte Carlo methods aim to reduce variance by taking samples that are more uniformly distributed in space, such as Sobol points. In such cases, the variance may be reduced to admit scalings of order $O\left(\frac{\ln(N)^d}{N}\right)$, where $d$ is the dimension, requiring $N\gg 2^d$ points to ensure better asymptotic scalings than a classical Monte Carlo method. This was shown to improve convergence in the DRM in \cite{chen2019quasi}. Another approach is to use importance sampling, where the quadrature nodes are taken as i.i.d. variables from a non-uniform probability distribution $\mathcal{P}$ on $\Omega$, with the quadrature rule taken as 
$$
Q(f;(\x_n)_{n=1}^N)=\frac{1}{N}\sum\limits_{n=1}^N \frac{f(\x_n)}{\mathcal{P}(\x_n)}. 
$$
In this case, the expected value of $Q$ is the exact integral, whilst the variance now has the form 
$$
\text{Var}(f;(\x_n)_{n=1}^N) =\frac{1}{N}\left(\int_\Omega \frac{f(\x)^2}{\mathcal{P}(\x)}\,d\x-\left(\int_\Omega f(\x)\,d\x\right)^2\right). 
$$
Whilst the asymptotic scaling of $O(N^{-1})$ is the same as in a classical MC scheme, an appropriate choice of $\mathcal{P}$ may significantly reduce the multiplicative constant. If $f$ is strictly positive, then taking $\mathcal{P}(\x)$ proportional to $f$ yields zero variance. 

An alternative approach is to use stratified Monte Carlo methods. The integration domain may be partitioned into elements $(E_n)_{n=1}^N$, with the stratified Monte Carlo rule corresponding to a summation of Monte Carlo methods on each element of the domain. This idea may be mixed with importance sampling approaches both within elements and between elements, and allows for using larger number of quadrature points in elements that admit higher variances in the MISER method \cite{press1990recursive}. In this work, we will consider the simplest case where we choose a single, uniformly sampled quadrature point per element according to a uniform distribution, $\x_n\sim \mathcal{U}(E_n)$. Thus, the corresponding stratified MC rule yields 
$$
Q(f;(\x_n)_{n=1}^N)=\sum\limits_{n=1}^N f(\x_n)|E_n|. 
$$
The most attractive feature of such an approach is that $Q$ is then exact for piecewise-constant functions over the partition $(E_n)_{n=1}^N$. In particular, we interpret the rule as a stochastic Gauss-type rule that is exact for piecewise-constants on the integration mesh described by the partition $(E_n)_{n=1}^N$.

\subsection{Gauss-type stochastic quadrature on integration meshes}
We will employ the notation $A\lesssim B$ to indicate that there exists some $C>0$, which may depend on the quadrature rule and its domain, but not on the integrand, such that $A\leq CB$. Similarly, we will use the notation $A\sim B$ to mean that $A\lesssim B$ and $B\lesssim A$ simultaneously. The technical statements and hypotheses of this section are deferred to \Cref{appErrorEstimates}.

To construct stochastic, Gauss-type rules, we first consider a bounded master element $\Omega_0\subset\mathbb{R}^d$, and consider random samples $\xi=(\x_j,w_j)_{j=1}^J \in (\Omega_0\times\mathbb{R})^J$ of quadrature points and weights taken according to a probability measure $\mu$ over $(\Omega_0\times\mathbb{R})^J$. We define the corresponding quadrature rule on our master element for an integrable function $f$ as 
$$
Q_M(f;\xi)=\sum\limits_{j=1}^J f(\x_j)w_j. 
$$
For $p\in\mathbb{N}$, we say that the stochastic quadrature rule is {\it of order $p$} if 
$$
Q_M(f;\xi)=\int_{\Omega_0} f(\x)\,d\x
$$
for every polynomial $f$ of order at most $p$ with $(\mu-)$probability 1. In particular, if $f\in C^{p+1}(\overline{\Omega}_0)$, $Q_M$ is of order $p$, and the weights $w_j$ are non-negative with probability 1, then 
\begin{equation}\label{eqMasterElementPBound}
\begin{split}
\left|Q_M(f;\xi)-\int_{\Omega_0}f(x)\,dx\right|\lesssim ||\nabla^{p+1}f||_\infty
\end{split}
\end{equation} 
Furthermore, we say that a rule is {\it unbiased} if 
$$
\mathbb{E}(Q_M(f;\xi))=\int_{\Omega_0}f(\x)\,d\x
$$
for all integrable $f:\Omega_0\to\mathbb{R}$.

Our global quadrature rules, inherited from $Q_M$, are then described via a partition of a computational domain $\Omega$ into elements $(E_n)_{n=1}^{N_e}$, so that $E_n=\y_n+A_n\Omega_0$ for a $d\times d$ matrix $A_n$ with positive determinant and $\y_n\in\mathbb{R}^d$. We take the elements to be open, pairwise disjoint, and such that $$
\bar{\Omega} = \bigcup\limits_{n=1}^{N_e}\overline{E}_n,
$$
as illustrated in \Cref{figIntegrationMesh}. We define the element-wise quadrature rules, $Q_n$, by 
$$
Q_n(f;\xi_n)=\sum\limits_{j=1}^J f(\y_n+A_n\x_j)w_j\det(A_n), 
$$
with $\xi_n$ sampled according to $\mu$, which then yield the global quadrature rule 
$$
Q(f;(\xi_n)_{n=1}^{N_e})=\sum\limits_{n=1}^{N_e}Q_n(f;\xi_n). 
$$

\begin{figure}[h!]
\begin{center}
\begin{tikzpicture}

\node[draw,thick, regular polygon, regular polygon sides=3, minimum size=3cm] (omega) at (0,0) {};
\node at (omega.center) {\large $\Omega_0$};

\begin{scope}[shift={(4,-0.5)}]
    \fill[blue!30] (0,0) -- (1,0) -- (1,1) -- cycle;
    \fill[blue!30] (0,1) -- (1,2) -- (0,2)-- cycle;

    \draw[thick] (0,0) -- (2,0) -- (2,2) -- (0,2) -- cycle;
    \draw[thick] (0,0) -- (2,2);
    \draw[thick] (1,0) -- (2,1);
    \draw[thick] (0,1) -- (1,2);
    \draw[thick] (1,0) -- (1,2);
    \draw[thick] (0,1) -- (2,1);

    \node at (0.7,0.3) {$E_1$};
    
    \node at (0.3,1.7) {$E_2$};

\end{scope}

\node at (2,0.9) {$\y_2 + A_2\x$};

\node at (2.2,-0.25) {$\y_1 + A_1\x$};

\draw[-{Stealth[length=3mm, width=3mm]}, thick] (omega.east) -- node[midway, below] {} (4.5,0.);
\draw[-{Stealth[length=3mm, width=3mm]}, thick] (omega.east) -- node[pos=0.3, above] {} (4.,1.25);

\end{tikzpicture}
\end{center}
\caption{Illustration of the master element and a mesh-based quadrature rule.\label{figIntegrationMesh}}
\end{figure}
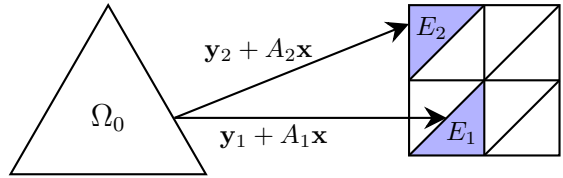

Unless stated otherwise, we take each $\xi_n$ to be i.i.d. according to $\mu$. When unambiguous, we will denote the global quadrature rule by $Q(f)$.

If $Q_M$ is unbiased, then the resulting quadrature rule on $\Omega$ is also unbiased, so that 
\begin{equation}
\mathbb{E}\left(Q(f;(\xi_n)_{n=1}^{N_e})\right)=\int_\Omega f(x)\,dx. 
\end{equation}
Under certain geometric restrictions on the integration elements (see \Cref{propVarUpperBound}), we obtain uniform quadrature error estimates. If $Q_M$ is an order-$p$ rule with positive weights, or more generally satisfies an inequality of the form \eqref{eqMasterElementPBound}, $Q$ will satisfy, with probability 1,
\begin{equation}\label{eqUniformPError}
\begin{split}
\left|\int_\Omega f(x)\,dx - Q(f,(\xi_n)_{n=1}^{N_e})\right| \lesssim N^{-\frac{p+1}{d}}||\nabla^{p+1}f||_\infty
\end{split}
\end{equation}
for all $f\in C^{p+1}(\bar{\Omega})$, where $N=JN_e$ is the total number of quadrature points. 

Furthermore, if the samples $\xi_n$ are i.i.d. variables, under the same geometric restrictions of the partition, when $Q_M$ is unbiased and satisfies an inequality of the form \eqref{eqMasterElementPBound}, we obtain the following estimation of the variance: 
\begin{equation}\label{eqVarEst}
\begin{split}
\text{Var}(Q(f;(\xi_n)_{n=1}^{N_e}))\lesssim &||\nabla^{p+1}f||_{\infty}^2N^{-1-\frac{2p+2}{d}}. 
\end{split}
\end{equation}

We see that the asymptotic variance estimate \eqref{eqVarEst} yields a better asymptotic rate than MC, regardless of $d\geq 1$ and $p\geq 0$, although this advantage diminishes in high dimensions. In the pre-asymptotic regime, however, the analysis becomes more delicate and, thus, the benefits are uncertain (see \Cref{subsecOrderFixed} for a detailed numerical study). Furthermore, in higher dimensions, the number of integration elements in a uniform partition grows exponentially with $d$, so employing an integration mesh may require a prohibitive number of integration elements even when considering a rather coarse mesh.

\begin{remark}
In this work, we consider only the cases where the mappings from the master element $\Omega_0$ to the integration elements $E_n$ are affine. Nonetheless, if a nonlinear mapping $\varphi_n:\Omega_0\to E_n$ is employed, one may similarly define an element-wise quadrature rule, 
$$
Q_n(f;\xi)=\sum\limits_{j=1}^J f(\varphi_n(\x_j))\det(\nabla \varphi_n(\x_j))w_j. 
$$
If the quadrature rule is unbiased, then, provided sufficient regularity of $\varphi$ (e.g., $\varphi_n\in C^1(\overline{\Omega}_0)$ and $\det(\nabla\varphi(\x))$ is bounded away from zero), one has that $f(\varphi_n(\x))\det(\nabla \varphi_n(\x))$ is continuous, and via a change of variables one obtains
$$
\mathbb{E}(Q_n(f;\xi)) = \int_{\Omega_0}f(\varphi_n(\x))\det(\nabla \varphi_n(\x))\,d\x=\int_{E_n}f(\x)\,d\x.
$$
However, if the quadrature rule is of order $p$ on the master element, the quadrature rule $Q_n$ will not be of order $p$, but rather it will be exact if $\tilde{f}(\x):=f(\varphi_n(\x))\det(\nabla\varphi_n(\x))$ is an order-$p$ polynomial. In particular, for $f\in C^{p+1}(\bar{\Omega})$ and $\varphi_n\in C^{p+2}(\Omega_0)$, one may reclaim similar estimates involving the element-wise deformations $\varphi_n$, but this is beyond the scope of this work. 
\end{remark}

\subsection{Stochastic Gradient-based Methods}

When employing stochastic quadrature rules, we are approximating our loss function by a random variable, $L(\theta;\xi)$, where $\theta\in\Theta$ are the trainable weights of our network and $\xi$ corresponds to a sample of quadrature points and weights according to a probability measure $\mu$. Stochastic gradient-based optimisers update the parameters $\theta$ via the stochastic gradient, such as the SGD update rule,
$$
\theta_{k+1}=\theta_k-\gamma_k\frac{\partial L}{\partial\theta}(\theta_k;\xi), 
$$ 
where $\gamma_k>0$ are iteration-dependent learning rates. Whilst a deterministic gradient descent, under sufficient regularity assumptions such as strong convexity, would exhibit linear convergence to their minima, the variance in the stochastic gradient generally impedes convergence to a particular point, at best leading to the iterates tending to a steady-state probability distribution (see, for example, \cite{liu2021noise}). As such, to guarantee convergence, one must consider regimes with the learning rates $\gamma_k$ tending to zero adequately. A series of works (see, for example, \cite{barakat2021convergence,garrigos2023handbook,jin2023continuous,li2022revisiting,yu2021analysis}, and references therein) have considered such regimes rigorously for various gradient-based optimisers, and whilst the precise technical assumptions and conclusions vary across works, the common conclusion is that if the learning rates and other internal hyperparameters satisfy appropriate decay conditions, then the iterates converge to a critical point $\theta^*$ of the expected value of the stochastic loss, 
$$\mathcal{L}(\theta):=\mathbb{E}(L(\theta;\xi))=\int_XL(\theta;\xi)\,d\mu(\xi),$$ in the style of a central-limit theorem, that
\begin{equation}\label{eqCLTSGD}
\frac{\theta_{k+1}-\theta^*}{\sqrt{\gamma_k}}\to \mathcal{N}(0,W).
\end{equation}
Here, $\mathcal{N}$ denotes a multivariate Normal distribution and $W$ is a matrix that typically scales linearly in the covariance matrix of the stochastic gradient,
$$
\text{Cov}\left(\frac{\partial L}{\partial \theta}(\theta;\xi)\right)=\mathbb{E}\left(\frac{\partial L}{\partial \theta}(\theta;\xi)\otimes \frac{\partial L}{\partial \theta}(\theta;\xi)\right)-\mathbb{E}\left(\frac{\partial L}{\partial \theta}(\theta;\xi)\right)\otimes \mathbb{E}\left(\frac{\partial L}{\partial \theta}(\theta;\xi)\right).
$$
These results require that the stochastic gradient be an unbiased estimator of the gradient of $\mathcal{L}$, so that
$$
\frac{\partial \mathcal{L}}{\partial\theta}(\theta)=\mathbb{E}\left(\frac{\partial L}{\partial\theta}(\theta;\xi)\right). 
$$
In particular, we identify two key aspects: First, the expected values of the stochastic loss and gradient play key roles in identifying the long-term behaviour of the optimisation scheme, and this is intimately linked to the need for unbiased rules when using stochastic quadrature. Second, once the quadrature rule is unbiased, decreasing the variance of the stochastic gradient reduces the integration errors and improves the convergence rate.

\label{subsecAdam}
Within this work, we will employ a particular variant of the Adam optimiser, proposed in \cite{barakat2021convergence}, which we detail in Algorithm \ref{alg.Adam}.

\begin{algorithm}\caption{Adam Variant (\cite{barakat2021convergence}). All operations are considered componentwise. }\label{alg.Adam}
\KwData{Step sizes $(\gamma_k)$, first order moment decay rates $(\beta^1_k)$, second order moment decay rates $(\beta^2_k)$, regularisation constant $\epsilon$, i.i.d. samples $(\xi_k)$, stochastic loss function $(L)$}
{\bf Initialisation}: $\theta_0\in\Theta$, $m_0=0$, $v_0=0$, $r^1_0=0$, $r^2_0=0$\;
\For{$k=0,...,k_{\max}$}{
$m_{k+1}=\beta^1_km_k+(1-\beta^1_k)\frac{\partial L}{\partial \theta}(\theta_k,\xi_k)$;\\
$v_{k+1}=\beta^2_kv_k+(1-\beta^2_k)\frac{\partial L}{\partial \theta}(\theta_k,\xi_k)^2$;\\
$r^1_{k+1}=\beta^1_kr^1_k+(1-\beta^1_k)$;\\
$r^2_{k+1}=\beta^2_kr^2_k+(1-\beta^2_k)$;\\
$\hat{m}_{k+1}=m_{k+1}/r^1_{k+1}$;\\
$\hat{v}_{k+1}=v_{k+1}/r^2_{k+1}$;\\
$\theta_{k+1}=\theta_k-\gamma_k\frac{\hat{m}_{k+1}}{\epsilon+\sqrt{\hat{v}_{k+1}}}$;
}
\end{algorithm}

\cite[Theorems 5.2, 5.7]{barakat2021convergence} shows that, under specific technical assumptions on the loss function and chosen hyperparameters, the iterates $\theta_k$ converge in probability to a critical point of $\mathcal{L}$. Furthermore, given that the iterates converge, then they satisfy a Central Limit Theorem-type result of the form \eqref{eqCLTSGD}, where $W$ is a matrix depending on the statistical properties of $L$ and its gradient, the local behaviour of $\mathcal{L}$ near the critical point, and the chosen hyperparameters in the optimiser. We do not detail here their technical assumptions required on the stochastic loss $L$, as these cannot be readily verified and many, such as strict convexity, are not expected to hold when using NNs.  

The covariance $W$ admits a semi-explicit expression that highlights two factors that may deteriorate convergence: (a) a large covariance in the stochastic gradient, and (b) ill-conditioning of the Hessian of $\mathcal{L}$. Reducing the former is the focus of this work. The latter, dictated by the structure of $\mathcal{L}$ and the NN architecture, is significantly more difficult to control. We highlight the work of \cite{de2023operator} that considered a theoretical treatment of the commonly encountered issue of ill-conditioning in physics-informed machine learning and proposed potential avenues to overcome them; however, they currently remain impractical for complex problems.

We employ the following choices of $\gamma_k,\beta^1_k,\beta^2_k$, and $\epsilon$, which are in agreement with Assumptions 5.1 and 5.3 in \cite{barakat2021convergence}. 
\begin{equation}\label{eqAdamHyperparamters}
\begin{split}
\gamma_k = &
\begin{cases} 
\gamma_0, & \text{if } k < \sqrt{k_{\max}}, \\
\frac{c}{b + k}, & \text{otherwise}.
\end{cases},\\
\beta^1_k =& 1-0.9\frac{\gamma_k}{\gamma_0},\\
\beta^2_k = &1-0.1\frac{\gamma_k}{\gamma_0},\\
\epsilon=&10^{-2},
\end{split}
\end{equation}
where $k$ is the current iteration, $k_{\max}$ is the total number of iterations employed for each particular experiment, \( \gamma_0 = 10^{-2} \) is the initial learning rate, and \( \gamma_f = 10^{-4} \) is the final learning rate. The constants $b$ and $c$ are determined so that $\frac{c}{b+\sqrt{k_{\max}}}=\gamma_0$, whilst $\gamma_{k_{\max}}=\gamma_f$, and are explicitly given by:\footnote{We emphasise that, whilst the work of \cite{barakat2021convergence} may, in certain cases, guarantee convergence in an asymptotic limit, as the user must prescribe the optimiser's hyperparameters at each iteration, this yields a significantly large numbers of degrees of freedom that, in practice, may not lead to acceptable solutions in a feasible number of iterations. Our choice of \eqref{eqAdamHyperparamters} was consequently {\it ad-hoc} and informed by preliminary studies.}

\begin{equation}
\begin{split}
b =& \frac{- (\gamma_f / \gamma_0) k_{\max} + \sqrt{k_{\max}}}{-1 + (\gamma_f / \gamma_0)},\\
c =& \gamma_0 \left( \frac{(\sqrt{k_{\max}} - k_{\max}) (\gamma_f / \gamma_0)}{-1 + (\gamma_f / \gamma_0)} \right).
\end{split}
\end{equation}
If the iterates converge as in \eqref{eqCLTSGD}, our choice of $\gamma_k$ implies convergence of the iterates of the form

\begin{equation}
||\theta_k-\theta^*||=O\left(\frac{1}{\sqrt{k}}\right).
\end{equation}
Furthermore, if the mapping from the parameter space $\Theta$ to the NN space $\{u(\cdot;\theta):\theta\in\Theta\}\subset H^1_0(\Omega)$ is (locally) Lipschitz, then:
\begin{equation}\label{eq.H1.converge}
||u(\cdot;\theta_k)-u(\cdot;\theta^*)||_{H^1_0(\Omega)}=O\left(\frac{1}{\sqrt{k}}\right). 
\end{equation}

\section{Bias}
\label{secBias}

When a sufficiently regular stochastic loss function $L(\theta;\xi)$ is implemented into an appropriate gradient-based optimiser, the best-case scenario is that the iterates $\theta_k$ converge to a (local) minimum $\theta^*$ of the average loss, 
$$
\mathcal{L}(\theta):=\mathbb{E}(L(\theta;\xi)). 
$$
Our aim is to minimise a continuum loss function $\mathcal{L}_C$ over an NN space, so to obtain good solutions, it is necessary that the stochastic loss is {\it unbiased}. In the case of the DRM, we require that 
\begin{equation}
\begin{split}
\int_\Omega \frac{1}{2}|\nabla u(\x)|^2+f(\x)u(\x)\,d\x=\mathbb{E}\left(\sum\limits_{n=1}^N \left( \frac{1}{2}|\nabla u(\x_n)|^2+f(\x_n)u(\x_n)\right)w_n\right).
\end{split}
\end{equation}
In this section, we show how biased integration rules may lead to erroneous results when using the DRM.  

First, we define a biased quadrature rule $Q^b_M(f)=w_1f(x_1)+w_2f(x_2)$ over the master element $[-1,1]$. We take $x_1\sim \mathcal{U}(0,1)$ and $x_2\sim \mathcal{U}(-1,0)$ independently. Then, the weights $w_1$ and $w_2$ are constructed so that the rule is order one, i.e., 

\begin{equation}
\begin{split}
w_{1} =\frac{2 x_{2}}{x_{2}-x_{1}};\hspace{0.025\textwidth} w_{2} =-\frac{2 x_{1}}{x_{2}-x_{1}}.
\end{split}
\end{equation}
To realise that the rule is biased, for $f\in C[-1,1]$, we evaluate the expected value of the quadrature rule as
\begin{equation}\label{eqP1Bias}
\begin{split}
\mathbb{E}(Q^b_M(f))=& \int_0^1\int_{-1}^0\frac{2 x_{2}}{x_{2}-x_{1}}f(x_1)\,dx_2\,dx_1+\int_0^1\int_{-1}^0-\frac{2 x_{1}}{x_{2}-x_{1}}f(x_2)\,dx_2\,dx_1\\
=& \int_0^1 f(x_1)\left(2+2x_1\ln\left(\frac{x_1}{1+x_1}\right)\right)\,dx_1+\int_{-1}^0 f(x_2)\left(2 x_{2} \ln \left(\frac{x_{2}-1}{x_{2}}\right) +2\right)\,dx_2\\
=&\int_{-1}^1 f(x)\underbrace{\left(2+2|x|\ln\left(\frac{|x|}{1+|x|}\right)\right)}_{W_0(x)}\,dx=\int_{-1}^1 f(x)W_0(x)\,dx. 
\end{split}
\end{equation}
The deviation of $W_0$ from 1 is indicative of bias. This will be compared with a two-point, first-order, unbiased rule, to be denoted $P_1$ in the sequel, following a similar construction but taking $x_2=-x_1$, rather than independent sampling.
We consider the 1D model problem with exact solution \eqref{eq.exact.sol}, with the architecture outlined in \Cref{subsec.Model.Architecture}, using 50,000 iterations of the Adam optimiser variant outlined in \Cref{subsecAdam}, employing the biased ($P^b_1$) and unbiased $(P_1)$ rules outlined previously. In both cases, we use a uniform partition of $[0,1]$ into 32 integration elements, giving 64 quadrature points for each rule.

\begin{figure}[H]\begin{center}
\begin{subfigure}[t]{0.45\textwidth}
\begin{center}
\includegraphics[width=0.95\textwidth]{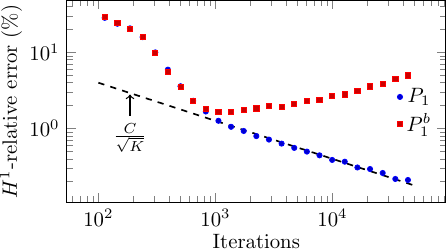}
\caption{The mean $H^1$ relative errors (\%) over logarithmically-spaced intervals. \label{fig_DRM_bias_h1} }
\end{center}
\end{subfigure}\hspace{0.05\textwidth}
\begin{subfigure}[t]{0.45\textwidth}
\begin{center}
\includegraphics[width=0.95\textwidth]{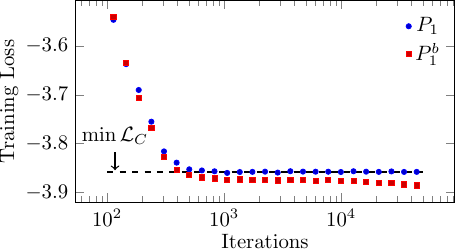}
\caption{The mean of the discretised loss over logarithmically-spaced intervals The dashed lines indicates the minimum of the continuum loss.\label{fig_DRM_bias_loss}}
\end{center}
\end{subfigure}
\begin{subfigure}[t]{0.45\textwidth}
\begin{center}
\includegraphics[width=0.95\textwidth]{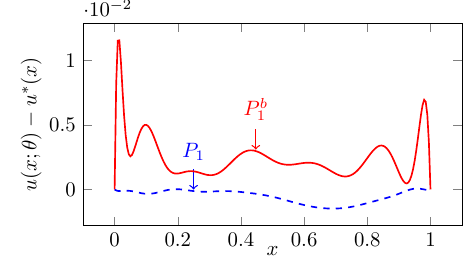}
\caption{The pointwise errors of the approximated solutions. \label{fig_DRM_bias_er}}
\end{center}
\end{subfigure}\hspace{0.05\textwidth}
\begin{subfigure}[t]{0.45\textwidth}
\begin{center}
\includegraphics[width=0.95\textwidth]{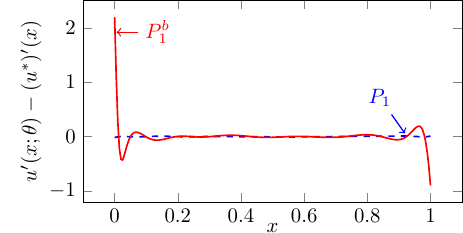}
\caption{The pointwise errors of the derivatives of the approximate solutions.\label{fig_DRM_bias_der}}
\end{center}
\end{subfigure}
\caption{Comparison of results using biased ($P_1^b$) and unbiased ($P_1$) order-one rules.}\label{fig.DRM.bias}
\end{center}
\end{figure}

\Cref{fig_DRM_bias_loss} presents the tendency of the loss via a log-binning approach\footnote{Here, and throughout the article, we employ log-binning approaches to show the tendency of data in the presence of noise \cite{clauset2009power}. Explicitly, for some $\beta>1$, the $x$-coordinates correspond to $\beta^{i+0.5}$ with $i\in\mathbb{N}$, whilst the $y$ coordinate corresponds to the average of the measured value for all iterations between $\beta^i$ and $\beta^{i+1}$.}. It shows that there is a significant integration issue when $P_1^b$ is employed, as the average values of the discrete loss fall below the exact minimum of $\mathcal{L}_C$. The increase of the error after several hundred iterations in \Cref{fig_DRM_bias_h1} when using $P_1^b$ is indicative of convergence to a distinct solution, and we see significant errors in the derivative of the approximated solution in \Cref{fig_DRM_bias_der}. The unbiased rule $P_1$, however, shows the correct convergence to the exact solution, attaining a final $H^1$-error of around $0.2\%$.

To understand the failure of $P_1^b$, we consider the bias of the global quadrature rule inherited from that on the master element when $N=h^{-1}$ uniform elements are employed. A change of variables on each element gives that 
$$
\mathbb{E}(Q(f))=\int_0^1 f(x)W_h(x)\,dx, 
$$
where $W_h(\x)$ is the $h$-periodic extension of $W_0\left(\frac{1}{2h}(x+1)\right)$. Thus, $W_h(x)$ admits oscillations of the same amplitude as those of $W_0(x)$, with frequency $h$ (see \Cref{fig.WPlot}). We recall that, even though the quadrature rule is biased, as it is order-one with positive weights, then we have a uniform quadrature error estimate 
$$
\left|Q(f)-\int_0^1 f(x)\,d\x\right|\lesssim h^{2}||f''||_\infty
$$
for $f\in C^{2}([0,1])$. In particular, for a {\it fixed} integrand, a sufficiently fine integration mesh can adequately approximate the integral. 
\\
\begin{figure}[h!]
\begin{center}
\includegraphics[width=0.45\textwidth]{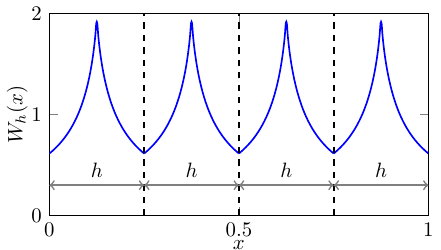}
\end{center}
\caption{Illustration of $W_h$ for $h=\frac{1}{4}$.}\label{fig.WPlot}
\end{figure}

Now, let us consider the effect of this quadrature rule when using the DRM. If we consider an arbitrary trial function $u\in H^1_0(0,1)$, then the expected value of the loss function is 
\begin{equation}
\int_0^1 \left(\frac{1}{2}u'(x)^2+f(x)u(x)\right)W_h(x)\,dx. 
\end{equation}
As $W_0$ is bounded away from $0$ and $\infty$, there exists a unique minimiser $u_h$, depending on $h$, satisfying the Euler-Lagrange equation, 
\begin{equation}
\int_0^1 W_h(x) u_h'(x) v'(x)+f(x)v(x)W_h(x)\,dx =0
\end{equation}
for all $v\in H^1_0(0,1)$. This may be identified as a case of periodic homogenisation (see, for example, \cite[Chapter 1]{bensoussan2011asymptotic}). Classical results give that the solutions $u_h$, as $h\to 0$, converge strongly in $L^2(0,1)$ and weakly in $H^1(0,1)$ to the unique solution $u_0\in H^1_0(0,1)$ of 
$$
\int_0^1 W_{\text{eff}} u_0'(x) v'(x)+f(x)v(x)\,dx=0
$$
for all $v\in H^1_0(0,1)$. Here, $W_{\text{eff}}$ is the harmonic mean of $W_0$ and, by Jensen's inequality, distinct from 1. Thus, outside of exceptional cases - for example, if $f$ is identically zero - we have that the minimisers of the average loss function, as $h\to 0$, converge strongly in $L^2$ to a distinct function, whilst it is not expected that they converge strongly in $H^1$ to anything. In higher dimensions, similar pathologies are expected, although the effective coefficient $W_{\text{eff}}$ may now be matrix-valued and lack an explicit representation.

As a result, when employing a biased quadrature rule in the DRM, its minimisers will generally not coincide with those of the continuous problem. In practice, this effect may be mitigated by the known spectral bias of NNs, whereby the NN fails to approximate high-frequency modes of the solution (see, for example, \cite{aldirany2024multi,cao2021towards,rahaman2019spectral}), as the NN would have to form high-frequency oscillations in order to approximate the oscillatory solutions $u_h$. In particular, with a very fine mesh, the effects of a bias/variance trade-off may yield satisfactory results with a biased integration rule. This failure is expected to be common to other loss functions based on weak formulations.

\section{Unbiased stochastic quadrature rules}\label{secUnbiasedRules}

\label{subsec.Properties.unbiased}

\subsection{Quadrature rules from the literature}\label{subsec.Old.quad}
\paragraph{Order-0 unbiased quadrature rule ($P_0$)}
As proposed in \cite{haber1966modified}, we consider the master element $\Omega_0=[0,1]^d$, and define the unbiased, one-point, order-$0$ rule $P_0$ by taking $\xi=(\x,1),$ uniformly distributed in $\Omega_0$, and defining 
$$
Q_M(f;\xi)=f(\x). 
$$
This rule is a stratified Monte Carlo method when extended to an integration mesh. Indeed, when partitioning $\Omega$ into elements $(E_n)_{n=1}^N$, choosing each $\x_n\in E_n$ uniformly, the global quadrature rule becomes 
$$
Q(f;\xi)=\sum\limits_{n=1}^{N_e}f(\x_n)|E_n|.
$$
\paragraph{Order-1 unbiased quadrature rule ($P_1$).}
As proposed in \cite{haber1967modified} we define the unbiased, two-point, order-one rule $P_1$ on the master element $\Omega_0=[-1,1]^d$ by taking $J=2$, $\x_1$ to be uniformly distributed on $[-1,1]^d$, and $\x_2=-\x_1$. The corresponding weights are $w_1=w_2=\frac{1}{2^{d-1}}$, so that 
$$
Q_M(f;\xi)=\frac{1}{2^{d-1}}\left(f(\x_1)+f(-\x_1)\right).
$$
This quadrature rule is exact for all linear functions.
\paragraph{Order-3 unbiased quadrature rule ($P_3$).}
We consider the master element $[-1,1]^d$. Following \cite{siegel1985unbiased}, we encounter different definitions depending on the dimension. In one dimension, the rule has three quadrature points. The value $x_1$ is taken according to the distribution $\mathcal{P}(x)=3x^2$ on $(0,1)$. The quadrature rule is then given by 
$$
Q_M(f;\xi)=\frac{f(x_1)-2f(0)+f(-x_1)}{3x_1^2}+2f(0). 
$$
Whilst the weight corresponding to the quadrature point $x=0$ may be positive or negative, \cite{siegel1985unbiased} shows that this quadrature rule maintains a uniform error estimate of the form \eqref{eqMasterElementPBound}. 

In two dimensions, a five-point, order-three rule is employed on the master element $[-1,1]^2$. The numbers $x_1,y_1$ are taken from the distribution $\mathcal{P}(x,y)=\frac{3}{2}(x^2+y^2)$ on $(0,1)^2$. Then, the quadrature rule is defined as 
$$
Q_M(f;\xi)=2\frac{f(x_1,y_1)+f(-y_1,x_1)+f(y_1,-x_1)+f(-x_1,-y_1)-4f(0,0)}{3(x_1^2+y_1^2)}+4f(0,0). 
$$
The authors of \cite{siegel1985unbiased} also outline an efficient procedure for sampling from $\mathcal{P}$. Again, whilst the weights may be negative, the authors obtain an estimate of the form \eqref{eqMasterElementPBound}.
Finally, on the master element $[-1,1]^3$, $P_3$ corresponds to the tensor product rule using the three-point rule in 1D and the five-point rule in 2D, yielding a 15-point, order-three rule.

\subsection{Proposed quadrature rules}\label{subsec.New.Quad}

Herein, we further propose several unbiased rules for triangular master elements in 2D and tetrahedral master elements in 3D. Our interest in triangle- and tetrahedra-based meshes arises due to their flexibility when meshing complex geometries. Whilst these rules appear as extensions of the methodologies presented in \cite{haber1967modified,siegel1985unbiased}, to the authors' knowledge, they have not appeared in the literature. All of the proposed rules are built by exploiting the symmetries of the master elements.

\subsubsection{$P^\text{tri}_1$ - Order-1 rule in 2D triangular elements}\label{subsub.p1.2d}

The master element $T$, taken as an equilateral triangle, has vertices $$\left(0,1\right),\left(-\frac{\sqrt{3}}{2},-\frac{1}{2}\right), \left(\frac{\sqrt{3}}{2},-\frac{1}{2}\right). $$
We introduce the rotation matrix $R$, corresponding to a rotation by $\frac{2\pi}{3}$, for convenience. We remark that $T$ is invariant under $R$, so that $RT=T$. Furthermore, $R^3=I$, and $I+R+R^2=0$, where the latter will be essential for defining symmetric rules. Throughout, we will denote $\x=(x,y)\in T\subset\mathbb{R}^2$.

We define the three-point rule $P^1_t$ via the following construction. Take $\x\in T$ according to a uniform distribution on $T$, and consider three quadrature nodes, 
$$
\x_i=R^i\x
$$
for $i=0,1,2$. The quadrature rule is then defined as 
$$
Q_M(f)=\frac{\sqrt{3}}{4}\sum\limits_{i=0}^2f(\x_i). 
$$
As $|T|=\frac{3\sqrt{3}}{4}$, it is immediate that $Q_M(f)$ is exact for constant functions. Furthermore, we can see that it is exact on linear functions, as in this case $f(\x)={\bf v}\cdot\x$, and thus 
\begin{equation}
\begin{split}
Q_M(f)=&\frac{\sqrt{3}}{4}\sum\limits_{i=1}^3{\bf v}\cdot R^i\x\\
=&\frac{\sqrt{3}}{4}{\bf v}\cdot\left((I+R+R^2)\x\right)=0, 
\end{split}
\end{equation}
corresponding to the exact value. Finally, we see that $P^1_t$ is unbiased, as 
\begin{equation}
\begin{split}
\mathbb{E}(Q_M(f))=&\frac{1}{|T|}\int_{T}\frac{\sqrt{3}}{4}\sum\limits_{i=0}^2f(R^i\x)\,d\x\\
=&\frac{|T|}{3|T|}\sum\limits_{i=0}^2\int_T f(R^i\x)\,d\x\\
=& \int_T f(\x)\,d\x, 
\end{split}
\end{equation}
as $T$ is invariant under the measure-preserving rotation $R$. 
Uniform sampling on the triangle is performed as in \cite[Chapter XI, Section 2]{devroye2006nonuniform} by considering $a_1$ and $a_2$ to be i.i.d. uniform variables on $[0,1]$, and then relabelled such that $a_1<a_2$. The point is then written in barycentric coordinates as 
$$
\x = a_1 {\bf v}_1+(a_2-a_1){\bf v}_2+(1-a_2) {\bf v}_3
$$
with ${\bf v}_i$ being the vertices of the reference triangle.

\subsubsection{$P_2^\text{tri}$ - Order-2 rule in 2D triangular elements}\label{subsub.p2.2d}

We define the following four-point rule

\begin{equation}
Q(f)=w_1\left(f(\x)+f(R\x)+f(R^2\x)-3f(0)\right)+\frac{3\sqrt{3}}{4}f(0), 
\end{equation}
where 
$$
w_1(\x)=\frac{\sqrt{3}}{16|\x|^2}. 
$$
Finally, we take $\x$ from the probability distribution on $T$,
$$\mathcal{P}(\x)=\frac{1}{3w_1(\x)}=\frac{16|\x|^2}{3\sqrt{3}}.$$

\Cref{app.P2.2d} shows that this is an order-2 unbiased rule. Sampling from $\mathcal{P}$ is straightforward via a rejection criterion by observing that it is the restriction of the radial distribution $\frac{2}{\pi}|\x|^2$ on the unit ball to $T$. Thus, we sample $\x = r(\cos(\theta),\sin(\theta))$ with $r=u^\frac{1}{4}$, where $u\sim U(0,1)$ and $\theta\sim U(0,2\pi)$, and reject the sample if it lies outside of $T$. This has an acceptance rate of $\frac{3\sqrt{3}}{4\pi}\approx 41\%$. As a rejection-based sampler provokes implementation difficulties within the NN optimisation loop, herein, we produce a database of 10,000 points sampled from this distribution, which are then sampled uniformly during training.

Whilst a finite database of integration points may lead to overfitting, as we employ a number of iterations of the order of tens of thousands, each rule will be seen per-element only a handful of times during training. Furthermore, if $N_e$ is the number of elements, this yields roughly $4\times 10^4 N_e$ possible distinct quadrature points, so we do not expect to see overfitting. 
The proposed quadrature rule admits negative weights, which are in fact unbounded from above at the non-zero quadrature nodes and from below at zero. Nonetheless, we show in \Cref{subsecUnifEstimatesP2} that the quadrature rule satisfies the following uniform estimate for $f\in C^3(\bar{T})$:
$$
\left|Q_M(f)-\int_T f(\x)\,d\x\right|\lesssim ||\nabla^{3}f||_\infty. 
$$
In particular, when extended to a uniform mesh of $N_e$ elements, denoted by $Q$, we have a global variance estimate

$$
\text{Var}\left(Q(f)\right)\lesssim N^{-4}||\nabla^3f||_\infty^2. 
$$
for $f\in C^3(\bar{\Omega})$, where $N$ is the number of quadrature points employed (see \Cref{propVarUpperBound}).

\subsubsection{$P^\text{tet}_1$ - Order-1 rule in 3D on tetrahedral elements}\label{subsub.p1.3d}

We consider a reference tetrahedron $T$ with vertices $(1,1,1),(-1,-1,1),(-1,1,-1)$ and $(1,-1,-1)$. For our four-point, order-one rule, we first introduce the rotation matrix 
$$
R_0=\begin{pmatrix}
0 & 0& -1\\ 0 & -1 & 0\\ 1 & 0 & 0
\end{pmatrix}.
$$
We remark that $R_0T=T$, so that $T$ is invariant under $R_0$. Furthermore, 
$$
I+R_0+R_0^2+R_0^3=0. 
$$
Thus, it is immediate that the quadrature rule 
$$
Q(f)=\frac{2}{3}\left(f(\x)+f(R_0\x)+f(R_0^2\x)+f(R_0^3\x)\right)
$$
is exact for any constant or linear function and any $\x\in T$, as 
$$
\int_T x\,d\x=\int_Ty\,d\x =\int_Tz\,d\x= 0,
$$
and 
$$
\int_T1\,d\x=\frac{8}{3}.
$$
We furthermore have that if $\x$ is uniformly distributed on $T$, then $Q$ is an unbiased integration rule, as
\begin{equation}
\begin{split}
\mathbb{E}(Q(f))=& \frac{3}{8}\int_T\frac{2}{3}\left(f(\x)+f(R_0\x)+f(R_0^2\x)+f(R_0^3\x)\right)\,d\x\\
=& \frac{1}{4}\left(\int_T\left(f(\x)+f(R_0\x)+f(R_0^2\x)+f(R_0^3\x)\right)\,d\x\right)=\int_Tf(\x)\,d\x,
\end{split}
\end{equation}
as both $T$ and the uniform distribution are invariant under $R_0$. We remark that various similar constructions are possible using any four elements of the symmetry group of $T$ that sum to zero. One may employ, for example, other subgroups that are isomorphic to that generated by $R_0$, or the diagonal matrices
\begin{equation}
R_1=\text{diag}(1,1,1);\hspace{0.025cm}R_2=\text{diag}(1,-1,-1);\hspace{0.025cm}R_3=\text{diag}(-1,-1,1);\hspace{0.025cm}R_4=\text{diag}(-1,1,-1),\hspace{0.025cm}
\end{equation}
that are not closed under multiplication.
As in the triangular case, we uniformly sample using the approach of \cite[Chapter XI, Section 2]{devroye2006nonuniform} by considering $a_1,a_2$, and $a_3$ to be i.i.d. uniform variables on $[0,1]$, and then relabelled to be increasing. In barycentric coordinates, we write 
$$
\x = a_1{\bf v}_1+(a_2-a_1){\bf v}_2+(a_3-a_2) {\bf v}_3 +(1-a_4){\bf v}_4
$$
with ${\bf v}_i$ the vertices of the reference tetrahedron.

\subsubsection{$P_2^\text{tet}$ - Order-2 rule in 3D on tetrahedral elements}

\label{subsub.p2.3d}

In order to obtain a second-order rule, we invoke the full group of proper rotations under which $T$ is invariant. This is a set of 12 rotations, generated by 
\begin{equation}
\begin{split}
R_1=&\begin{pmatrix}
0 & 1 & 0\\ 0 & 0 & 1\\ 1 & 0 & 0
\end{pmatrix}; \hspace{0.05\textwidth}R_2 = \begin{pmatrix}
-1 & 0 & 0\\ 0 & 1 & 0 \\ 0 & 0 & -1
\end{pmatrix}.
\end{split}
\end{equation}
The group of 12 elements is then given by 
\begin{equation}
G=\{I,\, R_1,\, R_1^2, \,R_2,\, R_1  R_2, \,R_2  R_1,\, R_1^2  R_2,\, R_2 R_1^2,\, R_1  R_2 R_1^2,\, R_1^2 R_2  R_1,\, R_2  R_1  R_2,\, R_2  R_1^2  R_2\}.
\end{equation}

We then propose a 13-point quadrature rule of the form 
\begin{equation}
Q_M(f)=w_1\left(\sum\limits_{R\in G}f(R\x)-12f(0)\right)+\frac{8}{3}f(0), 
\end{equation}
where 
\begin{equation}
w_1=\frac{2}{15|\x|^2}
\end{equation}
whilst $\x$ is sampled from the probability distribution 
\begin{equation}
\mathcal{P}(\x)=\frac{5}{8}|\x|^2
\end{equation}
on $T$. Similarly to the second-order rule on the triangle, this corresponds to the restriction of a radially symmetric distribution to the tetrahedra, so that a rejection-based sampler may be used. This is achieved by taking the radius to be sampled by $r=\sqrt[5]{u}$ with $u\sim U(0,1)$ with the angle taken uniformly on the sphere, then rejecting the sample if it does not lie in $T$. 
The acceptance probability of this scheme is rather low, and given by 
\begin{equation}
\frac{\int_T|\x|^2\,d\x}{\int_{B_{\sqrt{3}}(0)}|\x|^2\,d\x}=\frac{2\sqrt{3}}{27\pi}\approx 4\%. 
\end{equation}
Thus, similarly to the order-2 rule in 2D, we produce a database of 10,000 samples in a pre-training stage, which are called upon during the optimisation stage. \Cref{app.P2.3d} shows that this is an unbiased rule, whilst  \Cref{subsecUnifEstimatesP2} proves that $Q_M$ satisfies the uniform estimate
$$
\left|Q_M(f)-\int_Tf(\x)\,d\x\right|\lesssim ||\nabla^{3}f||_\infty, 
$$
so that, when $Q_M$ is extended via a uniform mesh of $N$ elements to a global quadrature rule $Q$ using i.i.d. sampling, then, for $f\in C^3(\bar{\Omega})$, we obtain the variance estimate
$$
\text{Var}(Q(f))\lesssim N^{-\frac{8}{3}}||\nabla^3f||_\infty^2.
$$

\subsection{The effect of order in smooth problems}
\label{subsecOrderFixed}

We now consider the variances of various unbiased quadrature rules with fixed, smooth integrands. The variance scaling of an order-$p$, mesh-based rule converges asymptotically as $O(N^{-1-\frac{2p+2}{d}})$ for an order $p$ mesh-based rule is only valid in the asymptotic limit as $N\to\infty$. To study the pre-asymptotic regime numerically, we consider a purely empirical study, and consider the integration of the continuum loss function for the DRM at the exact solution for our model problems, outlined in \Cref{subsec.Model.Architecture}. 

In each example, we consider i.i.d. samples of our quadrature rules using a uniform partition of the domain $[0,1]^d$. In one dimension, as all master elements are intervals, this is trivial. In two dimensions, for the $P_0$ and $P_3$ rules - based on square master elements - we divide the domain into uniform squares. For the $P_1^\text{tri}$ and $P_2^\text{tri}$ rules - based on triangular elements - we begin by taking a square, uniform partition of the domain and divide each cell in two via to the diagonal (see \Cref{subfigTrianglePartition}). 
To cover $[0,1]^3$ by tetrahedra, we take the following decomposition (see \Cref{subfigTetrahedraMesh}): First, we cover a reference cube $[0,1]^3$ by 5 tetrahedra with a standard partitioning. We define the vertices $p_1=(1,1,1)$, $p_2=(0,0,1)$, $p_3=(0,1,0)$, $p_4=(1,0,0)$, for each we define a complement $\bar{p}_i$, obtained by swapping zeroes and ones. For example, $\bar{p}_2=(1,1,0)$. We then take a central element, twice as large as the others, with vertices $p_1,p_2,p_3,p_4$. The remaining four elements, which are similar, correspond to swapping one of $p_i$ with $\bar{p}_i$. With this partition, we then break $\Omega$ into cubes, using a straightforward affine mapping to obtain the partition of $E_n$ from the reference partition. 

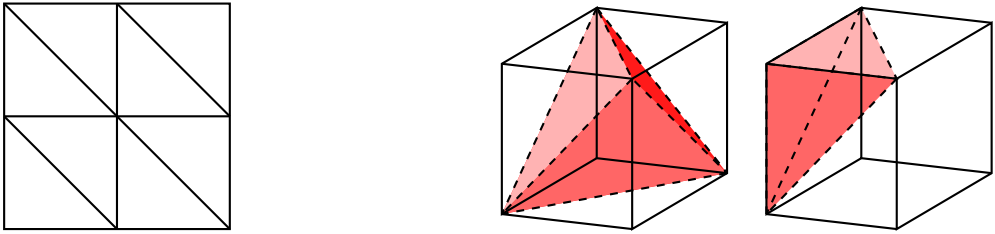
\begin{figure}[H]\begin{center}
\begin{subfigure}[t]{0.3\textwidth}\begin{center}
\begin{tikzpicture}[scale=1.5]
\begin{scope}
    \draw[thick] (0,0) -- (2,0) -- (2,2) -- (0,2) -- cycle;
    \draw[thick] (0,2) -- (2,0);
    \draw[thick] (1,0) -- (0,1);
    \draw[thick] (2,1) -- (1,2);
    \draw[thick] (1,0) -- (1,2);
    \draw[thick] (0,1) -- (2,1);
\end{scope}
\end{tikzpicture}\end{center}\caption{Illustration of the partition of $[0,1]^2$ into triangular elements.\label{subfigTrianglePartition}}\end{subfigure}
\hspace{0.05\textwidth}\begin{subfigure}[t]{0.6\textwidth}\begin{center}
\begin{tikzpicture}[scale=2]

\begin{scope}[rotate around y=165]
\coordinate (A) at (1,1,1);
\coordinate (B) at (0,0,1);
\coordinate (C) at (0,1,0);
\coordinate (D) at (1,0,0);

\fill[red!90] (A)--(B)--(D)--cycle;
\fill[red!30] (A)--(D)--(C)--cycle;
\fill[red!60] (D)--(B)--(C)--cycle;

\draw[thick,dashed,black] (A)--(B)--(C)--cycle;
\draw[thick,dashed,black] (A)--(D);
\draw[thick,dashed,black] (B)--(D);
\draw[thick,dashed,black] (C)--(D);

\draw[thick] (0,0,0)--(0,1,0)--(0,1,1)--(0,0,1)--cycle;
\draw[thick] (1,0,0)--(1,1,0)--(1,1,1)--(1,0,1)--cycle;

\draw[thick] (0,0,0)--(1,0,0);
\draw[thick] (0,1,0)--(1,1,0);
\draw[thick] (0,1,1)--(1,1,1);
\draw[thick] (0,0,1)--(1,0,1);
\end{scope}
\end{tikzpicture}\hspace{0.05\textwidth}\begin{tikzpicture}[scale=2]

\begin{scope}[rotate around y=165]
\coordinate (A) at (1,1,1);
\coordinate (B) at (1,1,0);
\coordinate (C) at (0,1,0);
\coordinate (D) at (1,0,0);

\fill[red!30] (A)--(B)--(C)--cycle;

\fill[red!60] (D)--(B)--(C)--cycle;

\draw[thick,dashed,black] (A)--(B)--(C)--cycle;
\draw[thick,dashed,black] (A)--(D);
\draw[thick,dashed,black] (B)--(D);
\draw[thick,dashed,black] (C)--(D);

\draw[thick] (0,0,0)--(0,1,0)--(0,1,1)--(0,0,1)--cycle;
\draw[thick] (1,0,0)--(1,1,0)--(1,1,1)--(1,0,1)--cycle;

\draw[thick] (0,0,0)--(1,0,0);
\draw[thick] (0,1,0)--(1,1,0);
\draw[thick] (0,1,1)--(1,1,1);
\draw[thick] (0,0,1)--(1,0,1);
\end{scope}
\end{tikzpicture}
\end{center}\caption{Illustration of the partition of $[0,1]^3$ into tetrahedral integration elements, showing the central element and one of the four similar elements.\label{subfigTetrahedraMesh}}
\end{subfigure}
\caption{The triangular and tetrahedral decompositions of $[0,1]^d$.}\end{center}
\end{figure}

\Cref{figVarianceTests} shows variance estimates of different quadrature rules. Each data point corresponds to the variance estimated by taking 1,000 loss evaluations at a given number of quadrature points. The dashed lines correspond to the asymptotic scalings $CN^{-1-\frac{2p+2}{d}}$, with $C$ obtained via interpolating over the last data point. 

\begin{figure}[H]\begin{center}
\begin{subfigure}[t]{0.45\textwidth}
\begin{center}
\includegraphics[width=0.95\textwidth]{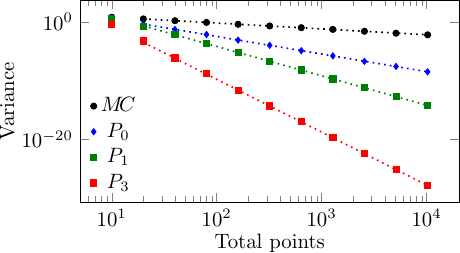}
\caption{One-dimensional example.\label{fig.1D.var.order} }
\end{center}
\end{subfigure}
\begin{subfigure}[t]{0.45\textwidth}
\begin{center}
\includegraphics[width=0.95\textwidth]{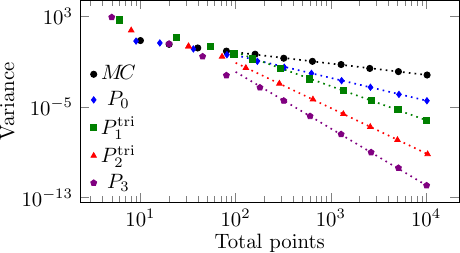}
\caption{Two-dimensional example.\label{fig.2D.var.order}}
\end{center}
\end{subfigure}
\begin{subfigure}[t]{0.45\textwidth}
\begin{center}
\includegraphics[width=0.95\textwidth]{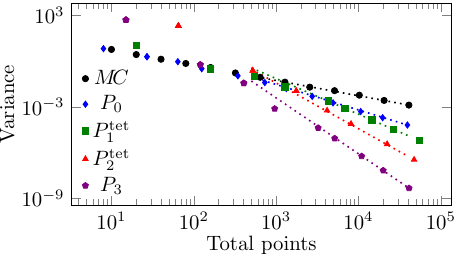}
\caption{Three-dimensional example.\label{fig.3D.var.order}}
\end{center}
\end{subfigure}
\caption{Empirically determined variances of unbiased integration rules on a fixed integrand.}\label{figVarianceTests}
\end{center}
\end{figure}

The effect of the order is most striking in the one-dimensional examples (see \Cref{fig.1D.var.order}), with the rules agreeing with their asymptotic scalings even with few integration points. The gains become significant even with a modest number of integration points, with $P_3$ outperforming $\MC$ by almost ten orders of magnitude at 96 integration points. We also highlight that $P_0$ - essentially a stratified Monte Carlo strategy - achieves smaller variance with 96 integration points than vanilla Monte Carlo quadrature with over 10,000 points. 

In two dimensions,  \Cref{fig.2D.var.order} shows little differences between the considered integration rules up to 100 integration points. In particular, the rules of order 0 to 3 all behave quantitatively like a vanilla Monte Carlo scheme. This suggests that, in a poor integration regime, mesh-based, stochastic, Gauss-type rules may not offer any significant difference with respect to MC. Beyond around 100 integration points, we see the rules following their predicted scalings, with the $P_3$ rule again outperforming the MC rule by several orders of magnitude with 10,000 integration points. Despite the gains, we note that they are less drastic than the one-dimensional case, again reflecting the curse of dimensionality. 

In the three-dimensional example of \Cref{fig.3D.var.order}, the limitations seen in two dimensions become even more apparent, requiring almost 1,000 points before the Gauss-type rules begin to behave distinctly from the Monte Carlo scheme. Nonetheless, we see that there are still significant gains to be made by using higher-order rules when employing a significant number of integration points.

\section{Numerical Results with DRM}
\label{secDRMResults}

\subsection{DRM - One-dimensional examples}

For fixed integrands, it was shown in \Cref{subsecOrderFixed} how the use of high-order rules can lead to a significant reduction in variance for a fixed number of quadrature points. As the effects are most dramatic in one dimension, we consider a more extensive study. In all cases, we use the model problem and architecture outlined in \Cref{subsec.Model.Architecture}, along with the Adam variant given in \Cref{subsecAdam}. We highlight that the NN is initialised identically for each experiment. We compare the rules $\MC,P_0,P_1$, and $P_3$ outlined in \Cref{subsec.Old.quad} in three regimes - poor, intermediate, and good integration.

\subsubsection{Case 1 - Poor integration}

First, we consider the poor integration case. We take six integration points for each of the experiments and we employ 100,000 iterations of the Adam optimiser described in \Cref{subsecAdam}. During training, we estimate the $H^1$-error at each iteration by employing a two-point deterministic Gaussian quadrature rule on a uniform mesh of 500 elements, leading to 1,000 points. The results are presented in \Cref{fig.DRM.1D_order_6}. 

\begin{figure}[H]\begin{center}
\begin{subfigure}[t]{0.45\textwidth}
\begin{center}
\includegraphics[width=0.95\textwidth]{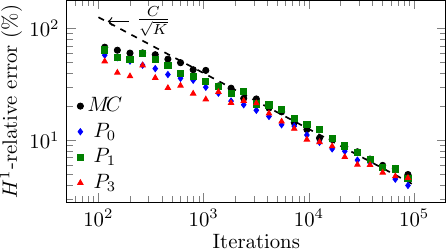}
\caption{Mean $H^1$ relative errors (\%) over logarithmically-spaced intervals. \label{fig.DRM.1D_order_6.H1}}
\end{center}
\end{subfigure}\hspace{0.05\textwidth}
\begin{subfigure}[t]{0.45\textwidth}
\begin{center}
\includegraphics[width=0.95\textwidth]{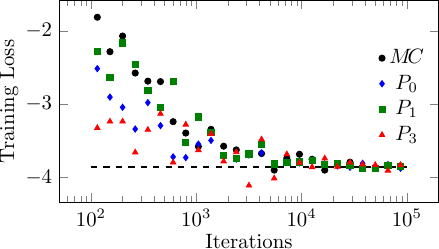}
\caption{Mean of the discretised loss over logarithmically-spaced intervals. The dashed line indicates the exact minimum of the continuum loss.}\label{fig.DRM.1D_order_6.loss}
\end{center}
\end{subfigure}
\begin{subfigure}[t]{0.45\textwidth}
\begin{center}
\includegraphics[width=0.95\textwidth]{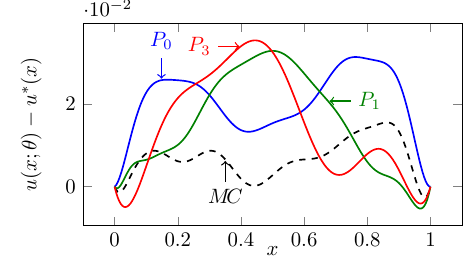}
\caption{Pointwise errors of the approximated solutions.\label{fig.DRM.1D_order_6.sols}}
\end{center}
\end{subfigure}\hspace{0.05\textwidth}
\begin{subfigure}[t]{0.45\textwidth}
\begin{center}
\includegraphics[width=0.95\textwidth]{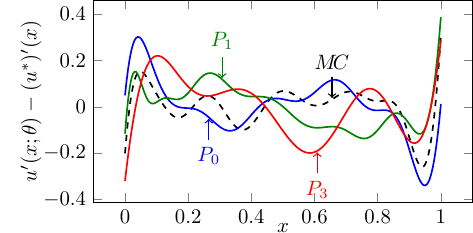}
\caption{Pointwise errors of the derivatives of the approximate solutions.\label{fig.DRM.1D_order_6.dsols}}
\end{center}
\end{subfigure}
\begin{subfigure}[t]{0.45\textwidth}
\begin{center}
\includegraphics[width=0.95\textwidth]{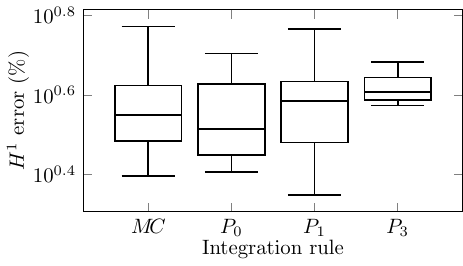}
\caption{Box plots of the relative $H^1$-error taken over the last 1,000 iterations.\label{fig.DRM.1D_order_6.boxh1}}
\end{center}
\end{subfigure}\hspace{0.05\textwidth}
\begin{subfigure}[t]{0.45\textwidth}
\begin{center}
\includegraphics[width=0.95\textwidth]{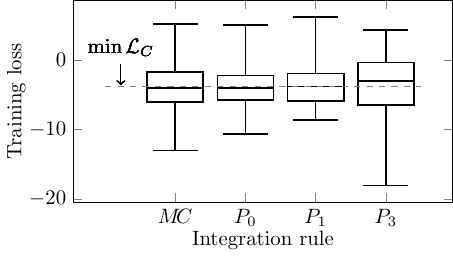}
\caption{Box plots of the training loss taken over the last 1,000 iterations.\label{fig.DRM.1D_order_6.boxloss}}
\end{center}
\end{subfigure}
\caption{1D DRM with poor integration ($N=6$).}\label{fig.DRM.1D_order_6}
\end{center}
\end{figure}

When employing poor integration, we see that all rules behave similarly, with the large variance leading to slow convergence, visible by the evolution of the $H^1$-error in \Cref{fig.DRM.1D_order_6.H1} following the theoretical $\frac{C}{\sqrt{K}}$ to final values of around 3\% in all cases. The errors in the solutions and their derivatives, shown in \Cref{fig.DRM.1D_order_6.sols,fig.DRM.1D_order_6.dsols} are distinct, but of similar magnitude. As the results are inherently noisy, \Cref{fig.DRM.1D_order_6.boxh1,fig.DRM.1D_order_6.boxloss} show box plots of the error and training loss over the last 1,000 iterations, suggesting little qualitative difference between the rules, whilst the $P_3$ rule performs marginally worse. 

\subsubsection{Case 2 - Moderate integration}

Next, we consider the same experiment with a moderate number of integration points - 32 - for each rule. As we have less noisy integration, we employ the Adam optimiser for 25,000 iterations, whilst the rest of the experimental setup remains the same as in Case 1. 

\begin{figure}[H]\begin{center}
\begin{subfigure}[t]{0.45\textwidth}
\begin{center}
\includegraphics[width=0.95\textwidth]{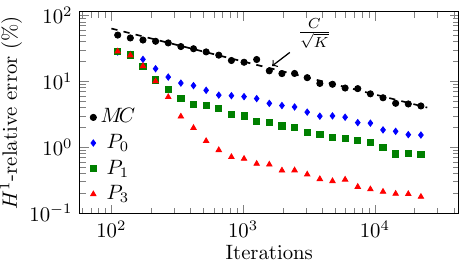}
\caption{Mean $H^1$ relative errors (\%) over logarithmically-spaced intervals. }\label{fig.DRM.1D_order_32.H1}
\end{center}
\end{subfigure}\hspace{0.05\textwidth}
\begin{subfigure}[t]{0.45\textwidth}
\begin{center}
\includegraphics[width=0.95\textwidth]{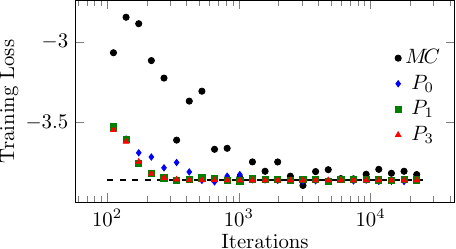}
\caption{Mean of the discretised loss over logarithmically-spaced intervals. The dashed line indicates the exact minimum of the continuum loss.}\label{fig.DRM.1D_order_32.loss}
\end{center}
\end{subfigure}
\begin{subfigure}[t]{0.45\textwidth}
\begin{center}
\includegraphics[width=0.95\textwidth]{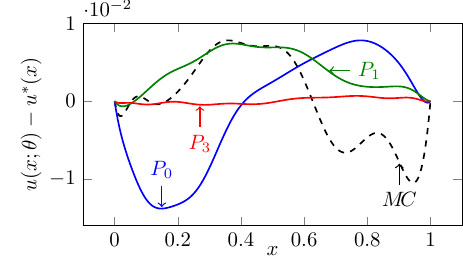}
\caption{Pointwise errors of the approximated solutions.}\label{fig.DRM.1D_order_32.sols}
\end{center}
\end{subfigure}\hspace{0.05\textwidth}
\begin{subfigure}[t]{0.45\textwidth}
\begin{center}
\includegraphics[width=0.95\textwidth]{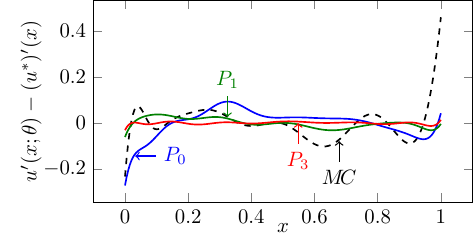}
\caption{Pointwise errors of the derivatives of the approximate solutions.}\label{fig.DRM.1D_order_32.dsols}
\end{center}
\end{subfigure}
\begin{subfigure}[t]{0.45\textwidth}
\begin{center}
\includegraphics[width=0.95\textwidth]{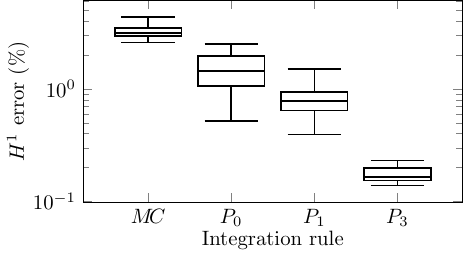}
\caption{Box plots of the relative $H^1$-error taken over the last 1,000 iterations.}\label{fig.DRM.1D_order_32.boxh1}
\end{center}
\end{subfigure}\hspace{0.05\textwidth}
\begin{subfigure}[t]{0.45\textwidth}
\begin{center}
\includegraphics[width=0.95\textwidth]{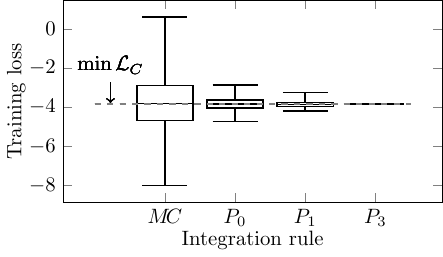}
\caption{Box plots of the training loss taken over the last 1,000 iterations.}\label{fig.DRM.1D_order_32.boxloss}
\end{center}
\end{subfigure}
\caption{1D DRM with moderate integration ($N=32$).}\label{fig.DRM.1D_order_32}
\end{center}
\end{figure}

We present the results in \Cref{fig.DRM.1D_order_32}. In this regime, we see a more striking difference, depending on the quadrature rule employed. In \Cref{fig.DRM.1D_order_32.H1}, we see that each relative $H^1$-error appears to follow the theoretical $\frac{C}{\sqrt{K}}$ scaling in each case. Nonetheless, the significant differences in the variance lead to more favourable constants for the higher-order rules. The faster convergence is also observable to a lesser extent when the training loss is viewed in \Cref{fig.DRM.1D_order_32.loss}. The reduction of error in the solutions and their derivatives is visible in \Cref{fig.DRM.1D_order_32.sols,fig.DRM.1D_order_32.sols}. In the box plots of the relative $H^1$-error sampled over the last 1,000 iterations in \Cref{fig.DRM.1D_order_32.boxh1}, the monotonic improvement with respect to the order is clearly visible. \Cref{fig.DRM.1D_order_32.boxloss} shows how the variation of the discretised loss is significantly reduced by using high-order rules.

\subsubsection{Case 3 - Good integration}

We now turn to the case of good integration, employing 252 integration points in each case. As we expect lower variances, we train for fewer iterations, using 10,000 with the Adam optimiser. Other than these details, the rest of the experiment is identical to those of Cases 1 and 2. The numerical results are shown in \Cref{fig.DRM.1D_order_252}. 

\begin{figure}[H]\begin{center}
\begin{subfigure}[t]{0.45\textwidth}
\begin{center}
\includegraphics[width=0.95\textwidth]{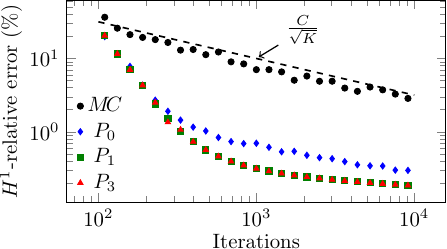}
\caption{Mean $H^1$ relative errors (\%) over logarithmically-spaced intervals. \label{fig.DRM.1D_order_252.H1}}
\end{center}
\end{subfigure}\hspace{0.05\textwidth}
\begin{subfigure}[t]{0.45\textwidth}
\begin{center}
\includegraphics[width=0.95\textwidth]{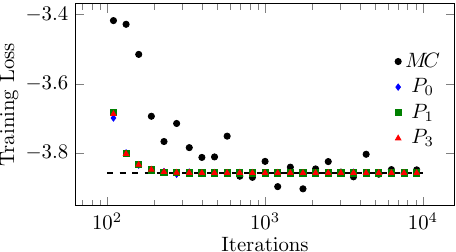}
\caption{Mean of the discretised loss over logarithmically-spaced intervals. The dashed line indicates the exact minimum of the continuum loss.\label{fig.DRM.1D_order_252.loss}}
\end{center}
\end{subfigure}
\begin{subfigure}[t]{0.45\textwidth}
\begin{center}
\includegraphics[width=0.95\textwidth]{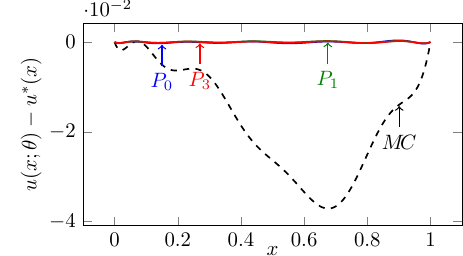}
\caption{Pointwise errors of the approximated solutions.\label{fig.DRM.1D_order_252.sols}}
\end{center}
\end{subfigure}\hspace{0.05\textwidth}
\begin{subfigure}[t]{0.45\textwidth}
\begin{center}
\includegraphics[width=0.95\textwidth]{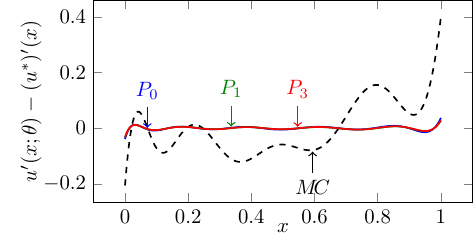}
\caption{Pointwise errors of the derivatives of the approximate solutions.\label{fig.DRM.1D_order_252.dsols}}
\end{center}
\end{subfigure}
\begin{subfigure}[t]{0.45\textwidth}
\begin{center}
\includegraphics[width=0.95\textwidth]{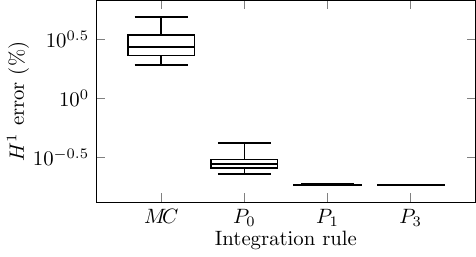}
\caption{Box plots of the relative $H^1$-error taken over the last 1,000 iterations.\label{fig.DRM.1D_order_252.boxh1}}
\end{center}
\end{subfigure}\hspace{0.05\textwidth}
\begin{subfigure}[t]{0.45\textwidth}
\begin{center}
\includegraphics[width=0.95\textwidth]{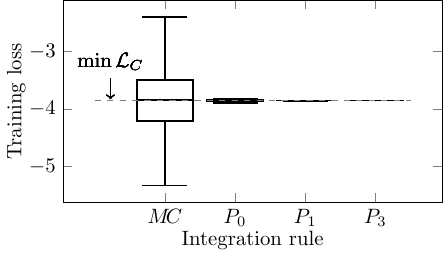}
\caption{Box plots of the training loss taken over the last 1,000 iterations.\label{fig.DRM.1D_order_252.boxloss}}
\end{center}
\end{subfigure}
\caption{1D DRM with good integration ($N=252$). }\label{fig.DRM.1D_order_252}
\end{center}
\end{figure}

The $H^1$-errors, when employing Gauss-type rules, no longer follow the $\frac{C}{\sqrt{K}}$ scaling. This is indicative of the limitations of the NN to approximate the exact solution and the potential presence of local minimisers of the discretised loss. In the best-case scenario, the iterates $\theta_k$ converge to a critical point $\theta^*$ of the discretised loss at a rate of $\frac{C}{\sqrt{K}}$, however, the realisation of the NN at $\theta^*$ is not expected to be the exact solution of the continuum problem. Thus, we no longer see a power-law structure in the log-log plot once we are near an optimal value of $\theta$. The near-indistinguishability of the behaviour of the $P_1$ and $P_3$ rules suggests that integration errors are insignificant during training. Conversely, we see that the $\MC$ rule admits errors an order of magnitude higher at the end of training with significant variation in its loss at the end of training.

\subsection{Two-dimensional study}\label{subsec_2D_DRM}

We train an NN with the DRM loss and two different integration rules: a vanilla Monte Carlo scheme ($\MC$) and the order-2 rule on triangular elements ($P_2^\text{tri}$), as described in \Cref{subsec.New.Quad}. For each, we consider two integration regimes: a poor integration regime consisting of 128 points and a fine regime consisting of 2312 points. We empirically selected these numbers so that variances of the discrete (stochastic) losses evaluated at the exact solution are approximately equal for the $\MC$ quadrature rule in the fine regime and the $P^\text{tet}_2$ rule in the poor regime (see \Cref{tableVars2DFew}). Thus, we expect the fine $\MC$ quadrature and poor $P^\text{tri}_2$ quadrature to yield similar results during training; however, the $P_2^\text{tri}$ rule employs a far lower number of quadrature points and, correspondingly, a lower computational cost. 

We consider the 2D model problem and architecture  described in \Cref{subsec.Model.Architecture} with 20,000 iterations of the Adam optimiser outlined in \Cref{subsecAdam}.

\begin{figure}[H]\begin{center}
\begin{subfigure}[t]{0.45\textwidth}
\begin{center}
\includegraphics[width=0.95\textwidth]{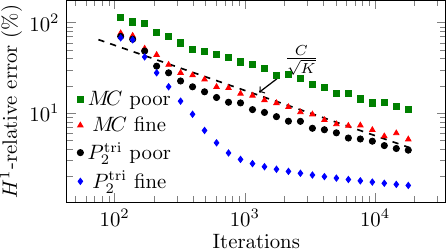}
\caption{The mean $H^1$ relative errors (\%) over logarithmically-spaced intervals. }\label{fig.DRM.2D_order_few.H1}
\end{center}
\end{subfigure}\hspace{0.05\textwidth}
\begin{subfigure}[t]{0.45\textwidth}
\begin{center}
\includegraphics[width=0.95\textwidth]{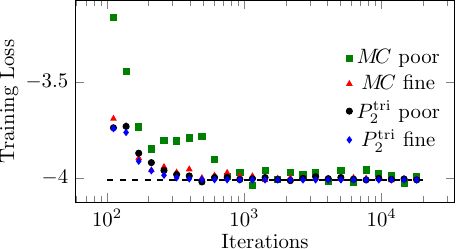}
\caption{The mean of the discretised loss over logarithmically-spaced intervals. The dashed line indicates the exact minimum of the continuum loss.}\label{fig.DRM.2D_order_few.loss}
\end{center}
\end{subfigure}
\begin{subfigure}[t]{0.45\textwidth}
\begin{center}
\includegraphics[width=0.95\textwidth]{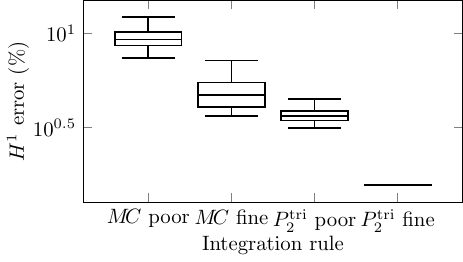}
\caption{Box plots of the relative $H^1$-error taken over the last 1,000 iterations.}\label{fig.DRM.2D_order_few.boxh1}
\end{center}
\end{subfigure}\hspace{0.05\textwidth}
\begin{subfigure}[t]{0.45\textwidth}
\begin{center}
\includegraphics[width=0.95\textwidth]{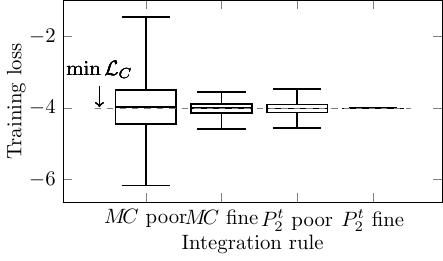}
\caption{Box plots of the training loss taken over the last 1,000 iterations.}\label{fig.DRM.2D_order_few.boxloss}
\end{center}
\end{subfigure}
\caption{2D DRM with different integration schemes. }\label{fig.DRM.2D_order_few}
\end{center}
\end{figure}

The numerical results are presented in \Cref{fig.DRM.2D_order_few}. We observe that the poor $P_2^\text{tri}$ and fine $\MC$ schemes produce generally similar results, both in terms of the $H^1$-error and training losses, whilst the $P_2^\text{tri}$ rule utilises around 20 times fewer integration points. In both the poor and fine integration regimes, the errors when the $P_2^\text{tri}$ rule is employed is around half of those when the $\MC$ rule with the same number of quadrature points is employed. Furthermore, in both regimes, in around 2,000 iterations, the $P_2^\text{tri}$ rule has reached similar $H^1$-errors as the $\MC$ rule achieves after 20,000. Furthermore, we include in \Cref{tableVars2DFew} the variances of the discretised loss using each scheme, both at the exact solution and the corresponding approximate solutions obtained at the end of training. These are estimated by taking 1,000 samples in each case. We see that the variances of $\MC$ in the fine regime and $P_2^\text{tri}$ in the poor regime are similar, both at the exact and their approximate solutions, with significant gains appearing when the $P_2^\text{tri}$ rule is used with the same number of integration points. Whilst the variances of the loss using the fine $\MC$ and poor $P_2^\text{tri}$ rules are similar, we still observe a slight, but consistent, improvement using the $P_2^\text{tri}$ rule. As the variances of the losses differ by less than $10\%$, we study this further by estimating the stochastic gradient at the trained solutions, recalling that the asymptotic behaviour is dictated by the properties of the covariance matrix of the loss rather than the loss itself. To quantify the size of this high-dimensional object, we employ its trace as a measure of total variance and its largest eigenvalue as a measure of the greatest component of the variance. Despite the similarity in the variance of the loss, the trace of the covariance matrix is estimated to be roughly four times greater in the fine $\MC$ rule than in the poor $P_2^\text{tri}$ rule, whilst the largest eigenvalue is almost 10 times larger. It is this discrepancy that appears to lead to the difference in solution quality using the two methods. We also highlight that, in all cases, the trace - corresponding to the sum of over 2,000 eigenvalues - is of comparable magnitude to the largest eigenvalue of the covariance matrix. In the fine $\MC$ case, around two thirds of the trace arises from a single eigenvalue. This is suggestive of the potential of focusing integration strategies for variance reduction on a small number of highly influential components of the integration error, even when the number of trainable parameters is large.

\begin{table}
\begin{center}
\begin{tabular}{|c||c|c|c|c|}
\hline
& Loss (exact) & Loss (Approximate) & $\text{Tr}(\text{Cov})$ & $\lambda_{\max}(\text{Cov})$ \\
\hline
$\MC$ poor & $0.552$ & $0.551$ &$30.7$ & $16.4$\\
\hline
$\MC$ fine & $0.0295$ & $ 0.0294$ & $1.97$ & $1.23$\\
\hline
$P^\text{tri}_2$ poor &$0.0280$ & $0.0274$ & $0.504$& $0.130$ \\
\hline
$P^\text{tri}_2$ fine & $2.26\times 10^{-7}$ & $2.17\times 10^{-7}$ & $5.73\times 10^{-6}$ & $1.43\times 10^{-6}$\\
\hline
\end{tabular}
\caption{Estimated variances of the losses and covariances of the stochastic gradient when evaluated at the exact solution and the corresponding approximate solution obtained via the DRM in 2D.}
\label{tableVars2DFew}\end{center}
\end{table}

\subsection{Three-dimensional study}\label{subsec_3D_DRM}

We now compare three $\MC$ schemes with the order-2 rule for tetrahedral elements $(P^\text{tet}_2)$. For the $P^\text{tet}_2$ scheme, we employ 1,755 points, corresponding to a $3\times 3\times 3$ partition of the unit cube into 27 uniform cubes, each filled with 5 tetrahedra. We choose three $\MC$ schemes: The ``poor" regime corresponds to the same number (1,755) of quadrature points. The ``mid" regime is chosen, as in the two-dimensional study, so that the variance of the loss at the exact solution is similar to that of the employed $P_2^\text{tet}$ rule (see \Cref{table_3D_vars_etc}), consisting of 5,320 points. The number of points in the ``fine" regime, 31,000, is empirically chosen so that the variance of the stochastic derivative is of a similar magnitude to that of the $P^\text{tet}_2$ rule. As in \Cref{tableVars2DFew}, this is done by estimating the covariance matrix over 500 realisations, and we use its trace as the metric we aim to match. 

\begin{figure}[H]\begin{center}
\begin{subfigure}[t]{0.45\textwidth}
\begin{center}
\includegraphics[width=0.95\textwidth]{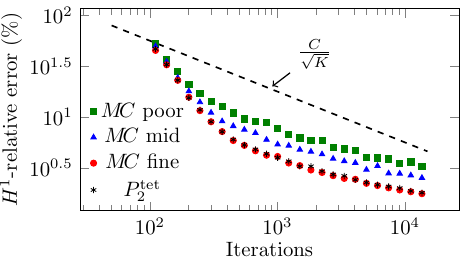}
\caption{The mean $H^1$ relative errors (\%) over logarithmically-spaced intervals. \label{fig.DRM.3D_order_few.H1}}
\end{center}
\end{subfigure}\hspace{0.05\textwidth}
\begin{subfigure}[t]{0.45\textwidth}
\begin{center}
\includegraphics[width=0.95\textwidth]{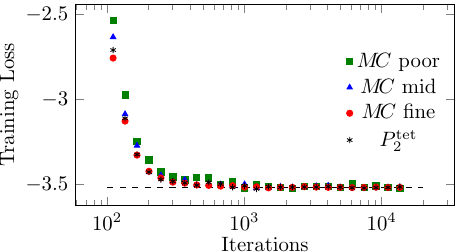}
\caption{The mean of the discretised loss over logarithmically-spaced intervals. The dashed line indicates the exact minimum of the continuum loss.}\label{fig.DRM.3D_order_few.loss}
\end{center}
\end{subfigure}
\begin{subfigure}[t]{0.45\textwidth}
\begin{center}
\includegraphics[width=0.95\textwidth]{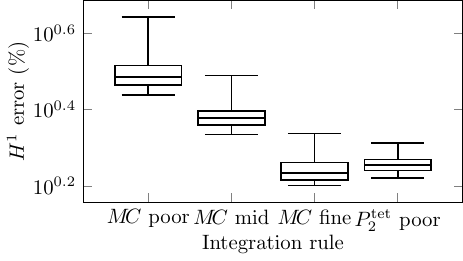}
\caption{Box plots of the relative $H^1$-error taken over the last 1,000 iterations.}\label{fig.DRM.3D_order_few.boxh1}
\end{center}
\end{subfigure}\hspace{0.05\textwidth}
\begin{subfigure}[t]{0.45\textwidth}
\begin{center}
\includegraphics[width=0.95\textwidth]{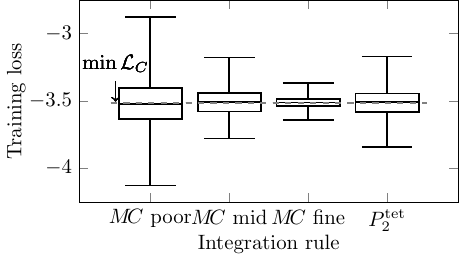}
\caption{Box plots of the training loss taken over the last 1,000 iterations.}\label{fig.DRM.3D_order_few.boxloss}
\end{center}
\end{subfigure}
\caption{3D DRM with different integration schemes. }\label{fig.DRM.3D_order_few}
\end{center}
\end{figure}

The results are presented in \Cref{fig.DRM.3D_order_few}. We see that the evolution of the $H^1$-error in the fine $\MC$ rule and $P^\text{tet}_2$ rule are nearly indistinguishable. We see, however, that the mid $\MC$ rule exhibits consistently poorer results during training. These results highlight clearly how the covariance of the stochastic derivative is a limiting factor of convergence, and how an integration strategy designed on minimising the variance of the stochastic loss function need not be equally applicable to variance reduction in the stochastic derivative - whilst the mid $\MC$ and $P_2^{\text{tet}}$ rules admit similar variances when evaluating the loss, the measures of the covariance of the stochastic derivative are highly disparate. 

\begin{table}[H]\begin{center}
\begin{tabular}{|c||c|c|c|c|c|}
\hline
& Points & Loss (exact) & Loss (approximate) & $\text{Tr}(\text{Cov})$ &  $\lambda_{\max}(\text{Cov})$\\
\hline
$\MC$ poor & $\bf 1,755 $&  $0.0340$ &$ 0.0333$ & $3.81$ & $2.58$ \\ 
\hline
$\MC$ mid & $5,320$ & $\bf 0.0105$ & {$\bf  0.0105$} &  $1.39$  & $0.964$ \\
\hline
$\MC$ fine & $31,000$& $0.00156$ & $0.00154$ & $ \bf 0.297$ & $0.214$ \\
\hline
$P_2^{\text{tet}}$ & $\bf 1,755 $ & $\bf 0.0110$ & {$\bf  0.0106$} & {$ \bf 0.268$} & $0.140$\\
\hline
\end{tabular}
\caption{Estimations of the variances of the loss at the exact and corresponding approximate solutions and metrics of the covariances of the stochastic gradient at the corresponding approximate solutions. Each number is estimated via 500 evaluations. Comparable metrics between different rules are highlighted in bold.\label{table_3D_vars_etc}}\end{center}
\end{table}

\section{Conclusions}\label{secConclusions}

We have studied various aspects of integration when using the Deep Ritz Method to solve PDEs using Neural Networks, focusing on low-dimensional problems. The Deep Ritz Method, based on the weak-formulation of the PDE, invokes a loss function lacking the interpolation property, making it particularly susceptible to integration errors. We have demonstrated how deterministic integration can lead to disastrous overfitting. Whilst Monte Carlo is taken as the standard integration method when using NNs, we show how the use of stochastic, mesh-based, unbiased and high-order integration rules in low-dimensional, smooth problems can lead to significant gains in convergence at an equivalent computational cost. Whilst such rules are expected to be significantly more accurate when a large number of integration points are used, we empirically show that when few integration points are used, they behave at worst like a vanilla Monte Carlo method. When employing stochastic integration rules, we have shown how the use of biased quadrature rules may also lead to erroneous results, and theoretical results suggest that even an arbitrarily fine integration mesh cannot compensate for bias. 

We have proposed new order-1 and order-2 unbiased integration rules on triangular elements in 2D and tetrahedral elements in 3D, which may prove useful in more complex geometries due to their amenability for meshing. We have further highlighted how distinct integration strategies that produce similar errors when evaluating the loss function may lead to highly disparate errors when evaluating the stochastic derivative and deliver significantly different results under the same training scheme. In particular, this emphasises that integration schemes should be designed to properly integrate the stochastic derivative that, as a high-dimensional object, proves challenging.

This work has inherent limitations that restrict its applicability to more general problems. First, we predominantly consider only the integration of the loss itself, whilst we see in the examples of \Cref{subsec_2D_DRM,subsec_3D_DRM} that it is the integration of the stochastic gradient that is the key object. This distinction is expected to be especially significant when solutions are singular or highly oscillatory, leading to more extreme differences in behaviour between the loss and its gradient. As such, further study and design of appropriate unbiased quadrature rules for singular integrands as well as methods for variance reduction of the high-dimensional stochastic derivative are necessary avenues for future work. Related to this, we have only considered prescribed, uniform integration meshes. In the style of Finite Element Methods, one may consider refinement strategies based on the principles of $h$- (element size), $p$- (order) and $r$- (node location) refinement. When localised features are present, well-implemented refinement strategies should lead to significant variance reduction and, thus, faster convergence. 

\section{Acknowledgements}
This work has received funding from the following Research Projects/Grants: 
European Union’s Horizon Europe research and innovation programme under the Marie Sklodowska-Curie Action MSCA-DN-101119556 (IN-DEEP). 
TED2021-132783B-I00 funded by MICIU/AEI /10.13039/501100011033 and by FEDER, EU;
PID2023-146678OB-I00 funded by MICIU/AEI /10.13039/501100011033 and by FEDER, EU;
BCAM Severo Ochoa accreditation of excellence CEX2021-001142-S funded by \\
{MICIU/AEI/10.13039/501100011033}; 
Basque Government through the BERC 2022-2025 program;
BEREZ-IA (KK-2023/00012) and RUL-ET(KK-2024/00086), funded by the Basque Government through ELKARTEK;
Consolidated Research Group MATHMODE (IT1456-22) given by the Department of Education of the Basque Government;

\bibliography{bib.bib} 

\appendix

\section{Construction of triangular and tetrahedral rules}\label{appTriTet}

\subsection{$P^2_t$ in 2D}\label{app.P2.2d}

In the spirit of \cite{siegel1985unbiased}, our aim is to exploit the symmetry of the triangle $T$ in order to produce an unbiased rule. The rule we consider takes samples $\x\in T$ according to the probability distribution $\mathcal{P}$ on $T$, and then selects a weight $w_1$ and quadrature rule as
\begin{equation}\label{eq.pwQ.p2.2d}
\begin{split}
\mathcal{P}(\x)=&\frac{1}{3w_1(\x)}=\frac{16|\x|^2}{3\sqrt{3}},\\
w_1(\x)=&\frac{\sqrt{3}}{16|\x|^2},\\
Q(f)=&w_1\left(f(\x)+f(R\x)+f(R^2\x)-3f(0)\right)+\frac{3\sqrt{3}}{4}f(0).
\end{split}
\end{equation}
We recall that $R$ is taken to be the rotation about the origin of $\frac{2\pi}{3}$, under which $T$ is invariant. For the constant function $f(\x)=1$, the term multiplied by $w_1$ vanishes, leaving only $\frac{3\sqrt{3}}{4}f(0)=|T|$, yielding exactness. For linear functions, $f(0)=0$, whilst $f(\x)+f(R\x)+f(R^2\x)=0$, so that $Q(f)=0$, again yielding the correct value. We turn to showing exactness for quadratics. 

First, we remark that $\mathcal{P}$ and $w_1$ are invariant under $R$, so that $\mathcal{P}(R\x)=\mathcal{P}(\x)$ and $w_1(\x)=w_1(R\x)$. As such, if $Q(f)$ is exact for some function $f$, it is immediately exact for $f(R\x)$. In particular, with the choice of $f(\x)=x^2$, where $\x=(x,y)$, we have that
\begin{equation}
\begin{split}
f(\x)=&x^2\\
f(R\x)=&\frac{x^2}{4}+\frac{\sqrt{3}}{2}xy+\frac{3}{4}y^2,\\
f(R^2\x)=&\frac{x^2}{4}-\frac{\sqrt{3}}{2}xy+\frac{3}{4}y^2. 
\end{split}
\end{equation}
Thus, we note that the span of $f(\x),f(R\x)$ and $f(R^2\x)$ is the same as the span of $x^2,xy$, and $y^2$. In particular, to show exactness for quadratics, it suffices to show that the integration rule is exact for $f(\x)=x^2$. Furthermore, 
$$
f(\x)+f(R\x)+f(R^2\x)=\frac{3}{2}(x^2+y^2)=\frac{3}{2}|\x|^2.
$$
As $f(0)=0$, we have 
$$
Q(f)=\frac{3w_1}{2}|\x|^2. 
$$
With this in mind, we take $w_1$ so that 
$$
\frac{3w_1}{2}|\x|^2=Q(f)=\int_Tx^2\,dx = \frac{3\sqrt{3}}{32},
$$
yielding the value in \eqref{eq.pwQ.p2.2d}. 
Finally, with the choice of $\mathcal{P}$ in \eqref{eq.pwQ.p2.2d}, we see that for any continuous function $f$, 
\begin{equation}
\begin{split}
\mathbb{E}(f)=&\int_TQ(f)\mathcal{P}(\x)\,d\x\\
=& \int_T\left(w_1(\x)\left(f(\x)+f(R\x)+f(R^2\x)-3f(0)\right)+\frac{3\sqrt{3}}{4}f(0)\right)\frac{1}{3w_1(\x)}\,d\x\\
=& \frac{1}{3}\int_T\left(f(\x)+f(R\x)+f(R^2\x)\right)\,d\x-\int_Tf(0)\,dx+\frac{3\sqrt{3}}{4}f(0)\int_T\mathcal{P}(\x)\,d\x\\
=&\int_Tf(\x)\,d\x-\frac{3\sqrt{3}}{4}f(0)+\frac{3\sqrt{3}}{4}f(0)=\int_Tf(\x)\,d\x.
\end{split}
\end{equation}

\subsection{$P^2_t$ in 3D}\label{app.P2.3d}

The construction of the $P^2_t$ rule in 3D is heuristically similar to that of 2D. We take $\x$ from the probability distribution $\mathcal{P}$ on the reference tetrahedron $T$, with the weight $w_1$ and quadrature rule $Q$ defined as

\begin{equation}
\begin{split}
\mathcal{P}(\x)=&\frac{5}{8}|\x|^2,\\
w_1=&\frac{2}{15|\x|^2},\\
Q(f)=&w_1\left(\sum\limits_{R\in G}f(R\x)-12f(0)\right)+\frac{8}{3}f(0),
\end{split}
\end{equation}
where $G$ is the order-12 group of all proper rotations under which the tetrahedron is invariant, as described in \Cref{subsub.p2.3d}. As $|T|=\frac{8}{3}$, $\int_T\x\,d\x=0$, and 
$$
\sum\limits_{R\in G} R=0, 
$$
it is immediate that the quadrature rule is exact for constant and linear functions. It remains to show that it is exact for homogeneous polynomials of order 2. 

Similarly to the 2D case, we will exploit the symmetry of the tetrahedron and the second order polynomials. If $f$ is a polynomial satisfying $f(x,y,z)=-f(\sigma_1x,\sigma_2y,\sigma_3z)$ where two of the $\sigma_i$ are $-1$ and one is $1$, then $Q(f)=0$, which is the exact value of the integral of $p$ over $T$. This holds for the quadratics $xy,yz$, and $xz$. Thus, to show exactness for quadratics, it suffices to show that the rule is exact for $x^2,y^2$ and $z^2$.

Via the symmetry property, taking $f(\x)=x^2$, we have that $f(R_1\x)=y^2$ and $f(R_1^2\x)=z^2$, where $$R_1=\begin{pmatrix}
0 & 1 & 0\\ 0 & 0 & 1\\ 1 & 0 & 0
\end{pmatrix}\in G.$$
Thus, in order to integrate all quadratic functions exactly, it is sufficient to choose $w_1$ so that $Q(f)$ is exact for $f(\x)=x^2$. Direct computation yields that 
\begin{equation}
\begin{split}
\int_Tx^2\,dx=&\frac{8}{15},\\
Q(f)=& 4w_1|\x|^2. 
\end{split}
\end{equation}
Thus, by taking $w_1(\x)=\frac{2}{15|\x|^2}$, we obtain an integration rule that is exact on $T$ for all quadratic functions. To see that $Q$ is unbiased, we observe that 
\begin{equation}
\begin{split}
\mathbb{E}(Q(f))=&\int_T\left(w_1(\x)\left(\sum\limits_{R\in G}f(R\x)-12f(0)\right)+\frac{8}{3}f(0)\right)\mathcal{P}(\x)\,d\x\\
=&\int_T\frac{1}{12}\sum\limits_{R\in G}f(R\x)\,d\x+f(0)\left(-\int_T1\,d\x+\frac{8}{3}\int_T\mathcal{P}(\x)\,d\x\right)\\
=& \frac{1}{12}\sum\limits_{R\in G}\int_Tf(R\x)\,d\x+f(0)\left(-\frac{8}{3}+\frac{8}{3}\right)\\
=& \frac{1}{12}\sum\limits_{R\in G}\int_T f(\x)\,d\x=\int_Tf(\x)\,d\x.
\end{split}
\end{equation}

\subsection{Uniform error estimates for $P^\text{tet}_2$ and $P^\text{tri}_2$}
\label{subsecUnifEstimatesP2}

\begin{proposition}
Let $T$ be the reference triangle in 2D and reference tetrahedron in 3D. For the rules $P_2^{\text{tri}}$ in 2D and $P_2^\text{tet}$ in 3D, there exists a constant $C>0$ such that, for all $f\in C^3(\bar{T})$, 
$$
\left|Q(f)-\int_Tf(\x)\,d\x\right|\leq C||\nabla^3f||_{\infty}. 
$$
\end{proposition}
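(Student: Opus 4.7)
The plan is to exploit two features of the rules: exactness on polynomials of degree at most~2 (established in \Cref{app.P2.2d,app.P2.3d}) and a Taylor expansion of $f$ at the origin to absorb the singularity $w_1(\x) \sim |\x|^{-2}$. Concretely, for $f \in C^3(\bar{T})$, I would write $f = P_2 + R$, where $P_2$ is the second-order Taylor polynomial of $f$ at $\x = 0$ and $R$ is the remainder. On the convex reference element $T$, the integral form of Taylor's theorem gives the pointwise bound
$$|R(\x)| \leq \tfrac{1}{6} \|\nabla^3 f\|_\infty |\x|^3,$$
and in particular $R(0) = 0$. By linearity of $Q$ and exactness on $P_2$, the quadrature error reduces to
$$Q(f) - \int_T f(\x)\,d\x \;=\; Q(R) - \int_T R(\x)\,d\x.$$

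The integral remainder is immediate: $\left|\int_T R\right| \leq \tfrac{1}{6} \|\nabla^3 f\|_\infty \int_T |\x|^3\,d\x$, which is a finite geometric constant depending only on $T$.

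The essential step, and the main obstacle, is bounding $Q(R)$ uniformly in the sample $\x$ despite the fact that $w_1(\x) \to \infty$ as $\x \to 0$. Here the crucial observation is that $R(0)=0$ eliminates the explicit $f(0)$-terms in the definition of $Q$. In 2D,
$$Q(R) \;=\; w_1(\x)\bigl(R(\x) + R(R\x) + R(R^2\x)\bigr),$$
and since rotations preserve the Euclidean norm, each term is bounded by $\tfrac{1}{6}\|\nabla^3 f\|_\infty |\x|^3$. Substituting $w_1(\x) = \tfrac{\sqrt{3}}{16|\x|^2}$ yields $|Q(R)| \lesssim \|\nabla^3 f\|_\infty |\x|$. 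The same argument works in 3D with the $12$ rotated terms of the group $G$ and $w_1(\x) = \tfrac{2}{15|\x|^2}$, again producing a bound of the form $C|\x| \,\|\nabla^3 f\|_\infty$. Since $|\x|$ is uniformly bounded on the bounded reference element $T$, this supplies the required estimate after combining with the bound on $\int_T R$.

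The core difficulty is precisely the interplay between the unbounded weight $w_1$ and the vanishing of the Taylor remainder: the singularity $|\x|^{-2}$ is exactly compensated by the cubic decay $|R(\x)| = O(|\x|^3)$ near the origin, with one order of $|\x|$ to spare. Invariance of $|\cdot|$ under the rotation group then prevents the rotated evaluation points from spoiling this cancellation, so no further delicate argument is needed; the final constant $C$ depends only on the diameter of $T$, the integral $\int_T|\x|^3 d\x$, and the explicit prefactors in $w_1$, and is otherwise independent of $f$.
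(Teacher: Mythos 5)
Your proposal is correct and takes essentially the same route as the paper's proof: Taylor-expand $f$ about the origin, use exactness on quadratics to reduce the error to the remainder term, and exploit the fact that the remainder vanishes cubically at $\x=0$ (together with rotation invariance of $|\x|$) to cancel the $|\x|^{-2}$ singularity of the weight, bounding the quadrature and integral contributions separately. The only cosmetic difference is that you use the integral form of the Taylor remainder while the paper uses the Lagrange form $\frac{1}{6}\nabla^3 f(\eta_\x)\cdot(\x)^{\otimes 3}$; both are valid on the convex reference element and yield the same estimate.
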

\begin{proof}
We begin by employing an exact Taylor expansion of $f$ about zero, giving that 
\begin{equation}
f(\x)=f(0)+\x\cdot\nabla f(0)+\frac{1}{2}\nabla^2 f(0)\x\cdot \x +\frac{1}{6}\nabla^3 f(\eta_{\x})\cdot (\x)^{\otimes 3},
\end{equation}
where $\eta_{\x}\in T$ and $(\x)^{\otimes 3}$ denotes the third-order tensor with components $((\x)^{\otimes 3})_{ijk}=x_ix_jx_k$. The first three terms are integrated exactly by an order-two rule, so that the integration error may be expressed as 
\begin{equation}
\begin{split}
\left|Q(f)-\int_T f(\x)\,d\x\right|=& \left|Q\left(\frac{1}{6}\nabla^3 f(\eta_{\x})\cdot (\x)^{\otimes 3}\right)-\int_T\frac{1}{6}\nabla^3f(\eta_\x)\cdot(\x)^{\otimes 3}\,d\x\right|\\
\leq & \left|Q\left(\frac{1}{6}\nabla^3 f(\eta_{\x})\cdot (\x)^{\otimes 3}\right)\right|+\left|\int_T\frac{1}{6}\nabla^3f(\eta_\x)\cdot(\x)^{\otimes 3}\,d\x\right|.
\end{split}
\end{equation}
We recall that in both 2D and 3D, the quadrature weight for the non-zero points may be expressed as $\frac{c_2}{|\x|^2}$ for a constant $c_2$, whilst $\frac{1}{6}\nabla^3 f(\eta_{\x})\cdot (\x)^{\otimes 3}=0$ when $\x=0$. We may then estimate
\begin{equation}
\begin{split}
\left|Q\left(\frac{1}{6}\nabla^3 f(\eta_{\x})\cdot (\x)^{\otimes 3}\right)\right|=&\frac{1}{6}\left|\frac{c_2}{|\x|^2}\sum\limits_{R\in G}\nabla^3 f(\eta_{R\x})\cdot (R\x)^{\otimes 3}\right|\\
\leq & \frac{c_2|\x|}{6}\sum\limits_{R\in G}|\nabla^3f(\eta_{R\x})|\\
\leq & C_1||\nabla^3f||_\infty.
\end{split}
\end{equation}
The second term follows similarly, 
\begin{equation}
\begin{split}
\left|\int_T\frac{1}{6}\nabla^3f(\eta_\x)\cdot(\x)^{\otimes 3}\,d\x\right|\leq & \frac{1}{6}\int_T|\nabla^3 f(\eta_\x)|\,|\x|^3\,d\x\\
\leq & C_2||\nabla^3 f||_\infty,
\end{split}
\end{equation}
yielding the final result with $C=C_1+C_2$. 
\end{proof}

\section{Proofs of error estimates}\label{appErrorEstimates}

This appendix proves various integration error estimates obtained with the Gauss-type rules we are employing. 

\begin{proposition}
Let $\Omega_0$ be a bounded domain and $Q:C(\overline{\Omega}_0)\to\mathbb{R}$ be an order $p$ quadrature rule, 
$$
Q(f)=\sum\limits_{j=1}^J f(\x_j)w_j,
$$
with $\x_j\in\Omega_0$ and $w_j\geq 0$ for $j=1,...,J$. Then,
$$
\left|Q(f)-\int_{\Omega_0}f(\x)\,d\x\right|\lesssim ||\nabla^{p+1}f||_\infty 
$$
for all $f\in C^{p+1}(\overline{\Omega}_0)$.
\end{proposition}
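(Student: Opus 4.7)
The plan is to reduce the bound to a Taylor-remainder estimate, exploiting the fact that an order-$p$ rule integrates polynomials of degree at most $p$ exactly, together with positivity of the weights.

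First, fix any point $\x_0\in\overline{\Omega}_0$ (for concreteness, one may take the centroid of $\Omega_0$). For $f\in C^{p+1}(\overline{\Omega}_0)$, write an exact Taylor expansion
\[
f(\x) = P(\x) + R(\x),
\]
where $P$ is the Taylor polynomial of $f$ at $\x_0$ of degree $p$, and $R$ is the integral-form (or Lagrange-form) remainder, which satisfies the pointwise estimate
\[
|R(\x)| \lesssim \|\nabla^{p+1}f\|_\infty\, |\x-\x_0|^{p+1} \lesssim \|\nabla^{p+1}f\|_\infty
\]
for every $\x\in\overline{\Omega}_0$, where the implicit constant depends only on the diameter of $\Omega_0$.

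Since $P$ is a polynomial of degree at most $p$ and $Q$ is an order-$p$ rule, $Q(P) = \int_{\Omega_0}P(\x)\,d\x$. Therefore
\[
\left|Q(f) - \int_{\Omega_0}f(\x)\,d\x\right|
= \left|Q(R) - \int_{\Omega_0}R(\x)\,d\x\right|
\le |Q(R)| + \int_{\Omega_0}|R(\x)|\,d\x.
\]
The second term is immediately controlled by $|\Omega_0|\,\|R\|_\infty \lesssim \|\nabla^{p+1}f\|_\infty$. For the first term, use that $w_j\ge 0$ to get $|Q(R)| \le \|R\|_\infty\sum_{j=1}^J w_j$. Now, since the rule is of order $p\ge 0$, it is in particular exact on constants, so $\sum_{j=1}^J w_j = Q(1) = |\Omega_0|$. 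Hence $|Q(R)| \le |\Omega_0|\,\|R\|_\infty \lesssim \|\nabla^{p+1}f\|_\infty$, and combining the two bounds yields the result.

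There is no serious obstacle; the only subtle point is the use of positivity of the weights, without which $\sum_j |w_j|$ need not equal $|\Omega_0|$ and the bound on $|Q(R)|$ could fail. This is precisely the reason the hypothesis $w_j\ge 0$ is needed, and it is also why the analogous estimates for the rules $P_3$, $P_2^{\text{tri}}$, and $P_2^{\text{tet}}$ (which admit negative weights) require the separate, tailored arguments given in \Cref{subsecUnifEstimatesP2}.
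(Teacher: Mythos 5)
Your proof is correct and follows essentially the same route as the paper's: a degree-$p$ Taylor expansion about a fixed point, exactness of the rule on the polynomial part, and then bounding the remainder term using $w_j\geq 0$ together with $\sum_j w_j = |\Omega_0|$ from exactness on constants. Your closing remark about why positivity is needed, and why the negative-weight rules require the separate arguments of \Cref{subsecUnifEstimatesP2}, is also consistent with the paper.
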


\begin{proof}
Without loss of generality, we assume that $0\in\Omega_0$. We employ a Taylor polynomial approximation of $f$ with exact remainder as 
\begin{equation}
\begin{split}
f(\x)=&\sum\limits_{k=1}^{p}\frac{1}{k!}\nabla^k f(0)\cdot (\x)^{\otimes k}+\frac{1}{(p+1)!}\nabla^{p+1}f(\eta_\x)\cdot(\x)^{\otimes p+1}. 
\end{split}
\end{equation}
The notation $(\x)^{\otimes k}$ denotes the order $k$ tensor corresponding to the tensor product of $\x$ with itself $k$ times. As the sum from $k=1$ to $p$ corresponds to an order $p$ polynomial, we have that the integration error satisfies 
\begin{equation}
\begin{split}
\left|Q(f)-\int_{\Omega_0}f(\x)\,d\x\right|=&\left|\sum\limits_{j=1}^J\frac{w_j}{(p+1)!}\nabla^{p+1}f(\eta_{\x_j})\cdot(\x_j)^{\otimes p+1}-\int_{\Omega_0}\frac{1}{(p+1)!}\nabla^{p+1}f(\eta_\x)\cdot(\x)^{\otimes p+1}\,d\x\right|\\
\leq & \left|\sum\limits_{j=1}^J\frac{w_j}{(p+1)!}\nabla^{p+1}f(\eta_{\x_j})\cdot(\x_j)^{\otimes p+1}\right|+\left|\int_{\Omega_0}\frac{1}{(p+1)!}\nabla^{p+1}f(\eta_\x)\cdot(\x)^{\otimes p+1}\,d\x\right|\\
\leq & \sum\limits_{j=1}^J\frac{1}{(p+1)!}|\nabla^{p+1}f(\eta_{\x_j})|\,|w_j|\,|\x_j|^{p+1}+\int_{\Omega_0}\frac{1}{(p+1)!}|\nabla^{p+1}f(\eta_\x)|\,|\x|^{p+1}\,d\x\\
\leq & \frac{1}{(p+1)!}||\nabla f^{p+1}||_\infty\left(\text{diam}(\Omega_0)^{p+1}|\Omega_0|+\int_{\Omega_0}|\x|^{p+1}\,d\x\right). 
\end{split}
\end{equation}
In the above, we have used that $w_j\geq 0$, and as the rule is of order $p$, $\sum\limits_{j=1}^J w_j=|\Omega_0|$. As $\Omega_0$ is bounded, the term in brackets is finite, yielding the result. 
\end{proof}

\begin{proposition}
Let $\Omega_0$ be a bounded domain and $Q_M:C(\overline{\Omega}_0)\to\mathbb{R}$ be a quadrature rule, 
$$
Q_M(f)=\sum\limits_{j=1}^J f(\x_j)w_j
$$
with $\x_j\in \Omega_0$ such that
$$
\left|Q_M(f)-\int_{\Omega_0}f(\x)\,d\x\right|\lesssim ||\nabla^{p+1}f||_\infty 
$$
for all $f\in C^{p+1}(\bar{\Omega}_0)$. If $A_n\in \mathbb{R}^{d\times d}$ has positive determinant with $\text{cond}(A_n)=||A_n^{-1}||\,||A_n||\lesssim 1$, and $\y_n\in\mathbb{R}^d$, $E_n=A_n\Omega_0+\y_n$, and $Q_n:C(\overline{E}_n)\to\mathbb{R}$ is defined by 
\begin{equation}\label{eqAppendixDefElementInt}
Q_n(f)=\sum\limits_{j=1}^J f(A\x_j+\y_n)\det(A),
\end{equation}
then 
\begin{equation}
\left|Q_n(f)-\int_E f(\x)\,d\x\right|\lesssim |E_n|\,\text{diam}(E_n)^{p+1}||\nabla^{p+1}f||_\infty
\end{equation}
for all $f\in C^{p+1}(\overline{E}_n)$.

\end{proposition}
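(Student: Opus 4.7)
The plan is to reduce the estimate on $E_n$ to the given estimate on $\Omega_0$ via the affine change of variables $\x \mapsto A_n\x + \y_n$. Set $g(\x) := f(A_n\x + \y_n)$ on $\overline{\Omega}_0$. Then by the standard change of variables formula and the definition \eqref{eqAppendixDefElementInt},
\begin{equation*}
\int_{E_n} f(\x)\,d\x = \det(A_n)\int_{\Omega_0} g(\x)\,d\x, \qquad Q_n(f) = \det(A_n)\, Q_M(g),
\end{equation*}
so the element-wise integration error equals $\det(A_n)$ times the master-element error of $Q_M$ applied to $g$. Applying the hypothesis on $Q_M$ to $g \in C^{p+1}(\overline{\Omega}_0)$ immediately yields
\begin{equation*}
\left|Q_n(f) - \int_{E_n}f(\x)\,d\x\right| \lesssim \det(A_n)\,\|\nabla^{p+1}g\|_{\infty}.
\end{equation*}

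Next, I would expand $\nabla^{p+1}g$ by the chain rule. Because the map is affine with constant Jacobian $A_n$, each of the $p+1$ differentiations produces one copy of $A_n$ applied to the corresponding tensor index, so that at every $\x\in\Omega_0$ the tensor $\nabla^{p+1}g(\x)$ is the contraction of $\nabla^{p+1}f(A_n\x+\y_n)$ with $A_n$ in each of its $p+1$ slots. This gives the pointwise bound $|\nabla^{p+1}g(\x)| \leq \|A_n\|^{p+1}|\nabla^{p+1}f(A_n\x+\y_n)|$, hence $\|\nabla^{p+1}g\|_{\infty} \leq \|A_n\|^{p+1}\|\nabla^{p+1}f\|_{\infty}$.

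Finally, I would translate the algebraic quantities $\det(A_n)$ and $\|A_n\|$ into the geometric quantities $|E_n|$ and $\text{diam}(E_n)$. The volume identity is immediate: $|E_n|=|\Omega_0|\det(A_n)$, so $\det(A_n)\sim|E_n|$ (hidden constant depending only on $\Omega_0$). For the operator norm, since $\Omega_0$ is bounded and open it contains some ball $B_r(\x_0)$; then for any unit vector $v$, the points $\x_0\pm rv$ lie in $\Omega_0$, so their images have distance $2r\|A_n v\| \leq \text{diam}(E_n)$ in $E_n$, and taking the supremum over unit $v$ gives $\|A_n\|\leq \frac{1}{2r}\text{diam}(E_n)$, i.e.\ $\|A_n\|\lesssim \text{diam}(E_n)$. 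Chaining the inequalities produces
\begin{equation*}
\left|Q_n(f)-\int_{E_n}f(\x)\,d\x\right| \lesssim |E_n|\,\text{diam}(E_n)^{p+1}\,\|\nabla^{p+1}f\|_{\infty},
\end{equation*}
as claimed.

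The only moderately delicate step is the bookkeeping in the chain-rule computation, to verify that exactly $p+1$ factors of $\|A_n\|$ appear; once that is in place, the rest is routine change-of-variables and elementary geometry. I note that the conditioning hypothesis $\text{cond}(A_n)\lesssim 1$ is not needed for this one-sided estimate; it would be relevant if one wanted to compare $\|A_n\|$ with $\det(A_n)^{1/d}$ in the opposite direction or to obtain uniform quasi-isotropy across the mesh for subsequent global estimates.
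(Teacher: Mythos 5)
Your proof is correct and follows the same overall strategy as the paper's: pull back to the master element via $\tilde f(\x)=f(A_n\x+\y_n)$, apply the hypothesised master-element bound, use the chain rule to pick up $\|A_n\|^{p+1}$, and convert $\det(A_n)$ and $\|A_n\|$ into $|E_n|$ and $\mathrm{diam}(E_n)$. The one genuine divergence is the final geometric step. The paper bounds $\mathrm{diam}(\Omega_0)\leq\|A_n^{-1}\|\,\mathrm{diam}(E_n)$, i.e.\ $\|A_n^{-1}\|^{-1}\lesssim\mathrm{diam}(E_n)$, and then invokes the shape-regularity hypothesis $\mathrm{cond}(A_n)\lesssim 1$ to pass from $\|A_n^{-1}\|^{-1}$ to $\|A_n\|$; you instead use an inscribed ball $B_r(\x_0)\subset\Omega_0$ and the pair of points $\x_0\pm rv$ to get the direct bound $2r\|A_n\|\leq\mathrm{diam}(E_n)$, with constant depending only on $\Omega_0$. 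Your route is slightly more elementary and, as you correctly observe, shows that the conditioning hypothesis is superfluous for this one-sided, element-wise estimate (it is a shape-regularity condition that the paper carries along for the mesh-based setting, but it is not what makes this inequality work); the paper's route avoids needing an interior ball, though for a bounded domain that costs nothing. Both arguments are valid, and your chain-rule bookkeeping ($p+1$ contractions with the constant Jacobian $A_n$, hence $\|\nabla^{p+1}\tilde f\|_\infty\lesssim\|A_n\|^{p+1}\|\nabla^{p+1}f\|_\infty$) matches the paper's.
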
\label{propElementEstimate}
\begin{proof}
Define $\tilde{f}:\Omega_0\to\mathbb{R}$ by $\tilde{f}(\x)=f(A_n\x+\y_n)$. If $f\in C^{p+1}(\overline{E}_n)$, then $\tilde{f}\in C^{p+1}(\overline{\Omega}_0)$. Thus, employing the change of variables $\x'=A_n^{-1}\x+\y_n$, 
\begin{equation}
\begin{split}
\left|Q_n(f)-\int_{E_n} f(\x)\,d\x\right|=&\left|\sum\limits_{j=1}^J f(A_n\x_j+\y_n)\det(A_n)-\det(A_n)\int_{\Omega_0}\tilde{f}(\x')\,d\x'\right|\\
= & \left|\sum\limits_{j=1}^J \tilde{f}(\x_j)\det(A_n)-\det(A_n)\int_{\Omega_0}\tilde{f}(\x')\,d\x'\right|\\
\lesssim & \det(A_n)||\nabla^{p+1}\tilde{f}||_\infty. 
\end{split}
\end{equation}
We note that $|E_n|=\det(A_n)|\Omega_0|$. Furthermore, we have the estimate $||\nabla^{p+1}\tilde{f}||_\infty\lesssim ||A_n||^{p+1}||\nabla^{p+1}f||_\infty$, which we aim to convert into an estimation based on the geometry of $E_n$, rather than the operator $A_n$. This is obtained by first observing that
\begin{equation}
\text{diam}(\Omega_0)=\sup\limits_{\x,\y\in \Omega_0}|\x-\y|=\sup\limits_{\x,\y\in E_n}|A_n^{-1}\x-A_n^{-1}\y|\leq  ||A_n^{-1}||\sup\limits_{\x,\y\in E_n}|\x-\y|=||A_n^{-1}||\,\text{diam}(E_n),
\end{equation}
which may be rearranged to give 
$$
||A_n^{-1}||^{-1}\leq \frac{\text{diam}(E_n)}{\text{diam}(\Omega_0)}. 
$$
As $\text{cond}(A_n)\lesssim 1$, we have that $||A_n||\lesssim ||A_n^{-1}||^{-1}$, and thus 
\begin{equation}
\begin{split}
||A_n||\lesssim ||A_n^{-1}||^{-1}\leq &\frac{\text{diam}(E_n)}{\text{diam}(\Omega_0)}\lesssim \text{diam}(E_n).
\end{split}
\end{equation}
Combining these final estimates yields
\begin{equation}
\left|Q_n(f)-\int_{E_n} f(\x)\,d\x\right|\lesssim |E_n|\,\text{diam}(E_n)^{p+1}||\nabla^{p+1}f||_\infty.
\end{equation}
\end{proof}

\begin{proposition}\label{propVarUpperBound}
Let $(E_n)_{n=1}^{N_e}$ be a partition of $\Omega$ with $E_n=A_n\Omega_0 +\y_n$. We assume that there is a length scale $h$ such that, independently of $n$ and $N_e$, $\text{diam}(E_n)\lesssim h$, $|E_n|\lesssim h^d$, $||A_n^{-1}||\,||A_n||\lesssim 1$, and $h^d\lesssim N=JN_e\lesssim h^d$.  

Let $Q_M$ be a quadrature rule with points and weights $\xi$ on the master element $\Omega_0$ such that, for every $f\in C^{p+1}(\overline{\Omega}_0)$, 
\begin{equation}
\left|Q_M(f;\xi)-\int_{\Omega_0}f(\x)\,dx\right|\lesssim||\nabla^{p+1}f||_\infty.
\end{equation}
If the quadrature rule is extended via $N_e$ (not necessarily independent) quadrature weights and points $(\xi_n)_{n=1}^{N_e}$ to a quadrature rule on $\Omega$ via
 $$
 Q(f;(\xi_n)_{n=1}^{N_e})=\sum\limits_{n=1}^{N_e}Q_n(f;\xi_n)
 $$
 with $Q_n$ as in \eqref{eqAppendixDefElementInt}, then 
 $$
\left|Q(f;(\xi_n)_{n=1}^{N_e})-\int_\Omega f(\x)\,d\x\right|\lesssim N^{-\frac{p+1}{d}}||\nabla^{p+1}f||_\infty.
 $$
 Furthermore, if the quadrature points are i.i.d. sampled from a distribution $\mu$ and yield an unbiased rule, the variance estimate
$$
\text{Var}(Q(f;(\xi_n)_{n=1}^{N_e})\lesssim N^{-1-\frac{2p+2}{d}}||\nabla^{p+1}f||_{\infty}^2
$$
holds.
\end{proposition}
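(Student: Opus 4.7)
The plan is to reduce both the uniform error bound and the variance bound to element-wise estimates supplied by the previous Proposition \ref{propElementEstimate}, and then to translate the geometric length scale $h$ into the total number of quadrature points $N$ via the stated mesh regularity.

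For the uniform estimate, I would first apply Proposition \ref{propElementEstimate} on each $E_n$. This is legitimate because the master-element bound $|Q_M(f;\xi)-\int_{\Omega_0}f(\x)\,d\x|\lesssim \|\nabla^{p+1}f\|_\infty$ is assumed to hold pointwise in $\xi$ (with probability one), and the conditioning hypothesis $\text{cond}(A_n)\lesssim 1$ is exactly the input required by that proposition. This yields, for every $f\in C^{p+1}(\overline{E}_n)$,
$$\left|Q_n(f;\xi_n)-\int_{E_n}f(\x)\,d\x\right|\lesssim |E_n|\,\text{diam}(E_n)^{p+1}\,\|\nabla^{p+1}f\|_\infty\lesssim h^{d+p+1}\,\|\nabla^{p+1}f\|_\infty,$$
using $|E_n|\lesssim h^d$ and $\text{diam}(E_n)\lesssim h$. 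Summing via the triangle inequality over the $N_e$ elements and invoking $N_e\sim h^{-d}$ (coming from $N=JN_e\sim h^{-d}$ with $J$ fixed) converts the total bound into $h^{p+1}\,\|\nabla^{p+1}f\|_\infty\sim N^{-(p+1)/d}\,\|\nabla^{p+1}f\|_\infty$, which is the claimed uniform estimate.

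For the variance estimate, I would exploit that the $\xi_n$ are i.i.d., so the $Q_n$ are independent and $\text{Var}(Q) = \sum_{n=1}^{N_e}\text{Var}(Q_n)$. Unbiasedness on the master element transfers to each $Q_n$ by the change of variables used in Proposition \ref{propElementEstimate}, so $\text{Var}(Q_n) = \mathbb{E}\!\left[\bigl(Q_n-\int_{E_n}f\bigr)^{2}\right]$. Since the deterministic pointwise bound from the preceding paragraph holds with probability one, the second moment is controlled by the square of that bound: $\text{Var}(Q_n)\lesssim h^{2d+2p+2}\,\|\nabla^{p+1}f\|_\infty^2$. Summing over $N_e\sim h^{-d}$ elements yields
$$\text{Var}(Q)\lesssim h^{d+2p+2}\,\|\nabla^{p+1}f\|_\infty^2 \;=\; N^{-1-\frac{2p+2}{d}}\,\|\nabla^{p+1}f\|_\infty^2,$$
as required.

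There is no genuinely hard step here: the entire argument is the assembly of Proposition \ref{propElementEstimate} with the standard observation that, for an unbiased estimator possessing a deterministic worst-case error, the variance is bounded by the square of that error. The only points requiring care are (i) verifying that the conditioning hypothesis $\text{cond}(A_n)\lesssim 1$ allows the master-element bound to transfer uniformly to every element (which is precisely the content of the preceding proposition), and (ii) the bookkeeping between $h$ and $N$ dictated by the mesh regularity assumptions. Independence of the $(\xi_n)$ is essential for the variance estimate but plays no role in the uniform bound.
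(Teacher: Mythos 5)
Your proposal is correct and follows essentially the same route as the paper's own proof: element-wise application of the preceding proposition, summation over the $N_e\sim h^{-d}$ elements for the uniform bound, and, for the variance, independence to decompose $\text{Var}(Q)$ into element-wise variances which, by unbiasedness, equal mean-square errors controlled by the square of the deterministic element bound. No discrepancies worth noting.
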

\begin{proof}

Under the given assumptions, the uniform estimate from \Cref{propElementEstimate} becomes 
\begin{equation}
\begin{split}
\left|Q_n(f;\xi_n)-\int_{E_n}f(\x)\,d\x\right|\lesssim h^{d+p+1}||\nabla^{p+1}f||_\infty. 
\end{split}
\end{equation}

The uniform estimate arises by summing over elements, as 
\begin{equation}
\begin{split}
\left|Q(f;(\xi_n)_{n=1}^{N_e})-\int_{\Omega}f(\x)\,d\x\right|=& \left|\sum\limits_{n=1}^{N_e}Q_n(f;\xi_n)-\int_{E_n}f(\x)\,d\x\right|\\
\leq & \sum\limits_{n=1}^{N_e}\left|Q_n(f;\xi_n)-\int_{E_n}f(\x)\,d\x\right|\\
\lesssim &N_e h^{d+p+1}||\nabla f||_\infty\lesssim N^{-\frac{p+1}{d}}||\nabla^{p+1}f||_\infty. 
\end{split}
\end{equation}

As each element-wise quadrature rule is statistically independent, we may decompose the variance of the global quadrature rule as
\begin{equation}
\begin{split}
\text{Var}(Q(f;(\xi_n)_{n=1}^{N_e})=&\text{Var}\left(\sum\limits_{n=1}^{N_e}Q_n(f;\xi_n)\right)\\
=& \sum\limits_{n=1}^{N_e}\text{Var}\left(Q_n(f;\xi_n)\right)\\
=& \sum\limits_{n=1}^{N_e}\mathbb{E}\left(\left|Q_n(f;\xi_n)-\int_{E_n}f(\x)\,d\x\right|^2\right)\\
\lesssim & N_e h^{2d+2p+2}||\nabla^{p+1}f||_\infty^2\lesssim N^{-1-\frac{2p+2}{d}}.
\end{split}
\end{equation}

\end{proof}

\end{document}